\def\ps@pprintTitle{%
 \let\@oddhead\@empty
 \let\@evenhead\@empty
 \def\@oddfoot{\centerline{\thepage}}%
 \let\@evenfoot\@oddfoot}
\newtheorem{theorem}{Theorem}
\newenvironment{proof}{\par\addvspace{\baselineskip}\noindent\textit{Proof.}\;\relax}{\par\medskip}
\newenvironment{proofof}[1]{\par\addvspace{\baselineskip}\noindent\textit{Proof of #1.}\;\relax}{\par\medskip}
\newtheorem{lemma}{Lemma}
\newtheorem{con}[lemma]{Conjecture}
\newtheorem{cor}[lemma]{Corollary}
\begin{document}

\begin{frontmatter}

\title{Bounds on the number of edges of edge-minimal, edge-maximal and $l$-hypertrees}

%\journal{Discussiones Mathematicae Graph Theory}

\author[addr1]{P\'eter G. N. Szab\'o}
\ead{szape@cs.bme.hu}
\address[addr1]{Budapest University of Technology and Economics\\
Department of Computer Science and Information Theory\\
3-9., M\H{u}egyetem rkp., H-1111 Budapest, Hungary}

\begin{abstract}
In their paper, \textit{Bounds on the Number of Edges in Hypertrees}, G.~Y. ~Katona and P.~G.~N. ~Szab\'o introduced a new, natural definition of hypertrees in $k$-uniform hypergraphs and gave lower and upper bounds on the number of edges. They also defined edge-minimal, edge-maximal and $l$-hypertrees and proved an upper bound on the edge number of $l$-hypertrees.

In the present paper, we verify the asymptotic sharpness of the $\binom{n}{k-1}$ upper bound on the number of edges of $k$-uniform hypertrees given in the above mentioned paper. We also make an improvement on the upper bound of the edge number of $2$-hypertrees and give a general extension construction with its consequences.

We give lower and upper bounds on the maximal number of edges of $k$-uniform edge-minimal hypertrees and a lower bound on the number of edges of $k$-uniform edge-maximal hypertrees. In the former case, the sharp upper bound is conjectured to be asymptotically $\frac{1}{k-1}\binom{n}{2}$.
\end{abstract}

\begin{keyword}
%% keywords here, in the form: keyword \sep keyword
hypertree\sep chain in hypergraph\sep edge-minimal hypertree\sep edge-maximal hypertree\sep 2-hypertree\sep Steiner system

%% MSC codes here, in the form: \MSC code \sep code
%% or \MSC[2008] code \sep code (2000 is the default)
\MSC 05C65\sep 05D99
\end{keyword}
\end{frontmatter}

%%************************************************************************

\section{Introduction}

The concept of chains was applied successfully in the generalisation of Ha\-mil\-ton\-i\-an-cycles to hypergraphs \cite{hamchains}. This definition seems to be useful for other purposes, for example, if one looks for an extension of a definition that involves paths to hypergraphs.
Based on this idea, a new concept for trees in $k$-uniform 
hypergraphs was introduced in \cite{Gyn}, and several different definitions were given for various types 
of hypertrees.

The authors then proved upper and lower bounds for the number of
edges in such hypertrees. First, we recall necessary definitions and summarize relevant theorems from \cite{Gyn}. In the present paper, the 
reader will find improvements of some of the earlier results, as well as 
new results in other areas of the topic.

Let $\mathcal{H}=(V,\mathcal{E})$ be a $k$-uniform hypergraph (with no multiple edges).
It is called a
\begin{itemize}
\item \textit{cycle} if there exists a cyclic
sequence $v_1,v_2,\ldots, v_l$ of its vertices such that every vertex
appears at least once (possibly more times) in it, and $\mathcal{E}$ consists of $l$ distinct edges of the form $\{v_{i}, v_{i+1},\ldots, v_{i+k-1}\}$, $1\leq i\leq l$;
\item \textit{semicycle} if there exists a
sequence $v_1,v_2,\ldots, v_l$ of its vertices such that every vertex
appears at least once (possibly more times) in it, $v_1=v_l$ and
$\mathcal{E}$ consists of $l-k+1$ distinct edges of the form $\{v_{i}, v_{i+1},\ldots, v_{i+k-1}\}$, $1\leq i\leq l-k+1$;
\item \textit{chain} if there exists a sequence
$v_1,v_2,\ldots, v_l$ of its vertices such that every vertex appears
at least once (possibly more times), $v_1\neq v_l$ and
$\mathcal{E}$ consists of $l-k+1$ distinct edges of the form $\{v_{i}, v_{i+1},\ldots, v_{i+k-1}\}$, $1\leq i\leq l-k+1$.
\end{itemize}

The \textit{length} of a cycle/semicycle/chain is the number of its edges. From the definition it follows that every semicycle has at least $3$ edges. A chain (semicycle) is \textit{non-self-intersecting} if every vertex appears exactly once in the defining sequence $v_1, v_2, \ldots, v_l$  (except for $v_1=v_l$).
It can be easily seen that if a $k$-uniform hypergraph
$\mathcal{H}$ contains a semicycle, then it contains also a
non-self-intersecting one,
and if $\mathcal{H}$ is semicycle-free, then every chain in it is non-self-intersecting (for detailed proofs see Section 2 in \cite{Gyn}).

As we mentioned earlier, chains play the most important role in defining
hypertrees because we intend to require a natural chain-connectedness
property.

A $k$-uniform hypergraph $\mathcal{H}$ is
\begin{itemize}
\item \textit{chain-connected} if every pair of its vertices is
connected by a chain, i.e., there exists a subhypergraph of it, which
is a chain and contains both vertices;
\item \textit{semicycle-free} if it contains no semicycle as a subhypergraph.
\end{itemize}

In \cite{Gyn}, The authors defined hypertrees by comparing equivalent definitions
of trees. Some of these definitions are not compatible with the concept of chain, while others
may be too general. One has to take into consideration that the original concept of cycle can be extended in two ways.

The $k$-uniform hypergraph $\mathcal{F}$ is a
\begin{itemize}
\item \textit{hypertree} if it is chain-connected and
semicycle-free;
\item \textit{$l$-hypertree} if it is a hypertree, and every chain in it has length at most $l$;
\item \textit{edge-minimal hypertree} if it is a hypertree, and deleting any edge
$e$, $\mathcal{F}\backslash\{e\}$ is not a hypertree any more
(i.e., chain-connectedness does not hold);
\item \textit{edge-maximal hypertree} if it is a hypertree, and adding any new
edge $e$, $\mathcal{F}\cup\{e\}$ is not a hypertree any more
(i.e., semicycle-freeness does not hold).
\end{itemize}

In this way, the edge maximal/minimal hypertrees are also common
hypertrees, but their edge-sets are extremely small/large. So, the last
two definitions describe the extreme cases among hypertrees.
One motivation to use the semicycle-free
property is that every chain is
non-self-intersecting in a hypertree, as we have mentioned previously. Without this property one must
face with substantially more complicated case-analyses.

Being connected by chains is not a transitive
property, thus it is not an equivalence relation. This is
a characteristic difference between hypertrees and common trees, which is responsible for most of the additional complexity.

We also remark that edge-minimality means that one can assign two vertices to every edge such that every chain connecting them contains the assigned edge, i.e., it must be in all of the minimal chains that connect these two vertices. One can rephrase that statement as follows: if for every two vertices of an edge-minimal hypertree we mark a chain connecting them, the marked chains together cover the whole edge-set of the hypertree.

Every $t$-$(n,k,1)$ block design is a hypertree (called \textit{$t$-geometric hypertree}) if $2\leq t\leq k-1$. This shows, that hypertrees can be considered as generalisations of $t$-block designs.

Finally, we summarise the already known results on the number of edges of hypertrees in the following theorems from \cite{Gyn}.
Let $\mathcal{F}=(V,\mathcal{E})$ be a $k$-uniform hypergraph, $n=|V|$ and $m=|\mathcal{E}|$.

\begin{theorem}[(Katona-Szab\'o \cite{Gyn})]\label{Tlower} If $\mathcal{F}$ is chain-connected and $n\geq
(k-1)^2$, then $m\geq n-(k-1)$, and this bound is tight.
\end{theorem}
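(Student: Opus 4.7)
For tightness, a non-self-intersecting chain $v_1,v_2,\dots,v_n$ on $n$ vertices has exactly $n-(k-1)$ edges, is chain-connected (any two vertices lie on the chain), and achieves the bound.

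For the lower bound, my plan is a longest-chain argument. Let $C=u_1,u_2,\dots,u_{L+k-1}$ be a longest non-self-intersecting chain in $\mathcal F$, with edge-set $f_1,\dots,f_L$. If $V(C)=V$, then $m\ge L=n-(k-1)$ and we are done. Otherwise, $V(C)\subsetneq V$, and I would exploit the maximality of $C$: for the "endpoint $(k-1)$-subset" $\{u_1,\dots,u_{k-1}\}$, any edge of $\mathcal F$ containing it, other than $f_1$, must have its remaining vertex in $V(C)$; for otherwise the edge could be prepended to $C$, producing a longer chain. The symmetric statement holds at the terminal endpoint $\{u_{L+1},\dots,u_{L+k-1}\}$.

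Given these restrictions, I would attach the vertices of $V\setminus V(C)$ to $C$ via chain-connectedness. For each $v\notin V(C)$, pick a chain from $v$ to a vertex of $C$; this chain contributes edges outside $\mathcal E(C)$. The idea is to set up an accounting showing that the edges of $\mathcal F\setminus\mathcal E(C)$ number at least $n-(L+k-1)$, which combined with $|\mathcal E(C)|=L$ yields $m\ge n-(k-1)$. The endpoint-constraints above should force that the off-$C$ edges cannot "economize" by simultaneously covering many off-$C$ vertices, so that the counting closes.

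\textbf{Main obstacle.} The main difficulty is that chain-concatenation is not free: a chain from $v\notin V(C)$ to $C$ cannot in general be spliced onto $C$ to yield a longer chain, because consecutive $k$-tuples must themselves be edges of $\mathcal F$. So one cannot simply iterate a "longest-chain extension" argument, and a genuine counting is required. I expect the proof must iteratively attach sub-chains (not individual vertices) to $C$, analyzing each via its own maximal structure relative to $V(C)$. The hypothesis $n\ge(k-1)^2$ likely enters at this technical step, guaranteeing that $C$ is substantially longer than the two endpoint zones of size $k-1$ each, so that the "boundary overhead" in the counting is absorbed and the bound closes out with equality witnessed by a single chain.
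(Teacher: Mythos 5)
This theorem is not proved in the present paper at all: it is quoted verbatim from the earlier Katona--Szab\'o paper \cite{Gyn}, and the only thing the present text adds is the remark that tightness is witnessed by non-self-intersecting chains and that the hypothesis $n\geq(k-1)^2$ cannot be dropped for $k\geq 6$. Your tightness argument coincides with that remark and is correct: a non-self-intersecting chain on $n$ vertices has exactly $n-(k-1)$ edges and is chain-connected.

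For the lower bound, however, what you have written is a plan with the central step missing, and you say so yourself. The entire content of the inequality $m\geq n-(k-1)$ is the accounting that you defer: one must show, in effect, that the edges can be organized so that beyond an initial chain each further edge accounts for at most one new vertex on average. The endpoint constraints you extract from the maximality of $C$ do not deliver this: a chain connecting a vertex $v\notin V(C)$ to $C$ need not involve the two terminal $(k-1)$-sets of $C$ at all, so those constraints place no restriction on how economically the off-$C$ edges cover $V\setminus V(C)$. Concretely, a single edge meeting $V(C)$ in one vertex covers $k-1$ new vertices at the cost of one edge; ruling out such savings requires using chain-connectedness of the \emph{new} vertices to everything else (an edge sharing only one vertex with $C$ cannot continue any chain of $C$, since consecutive edges of a chain must share $k-1$ vertices), and that is precisely the argument you have not supplied. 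You also have not located where $n\geq(k-1)^2$ enters, beyond a guess; the paper's own remark that for $k\geq 6$ there is a hypertree on $k+3$ vertices with only $3$ edges shows the bound genuinely fails for small $n$, so any correct proof must use this hypothesis in an essential way. As it stands, the proposal establishes tightness but not the bound itself.
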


The tightness of the above bound is obvious considering non-self intersecting chains. The condition $n\geq
(k-1)^2$ cannot be omitted if $k\geq 6$ because for such a $k$ there exists a $k$-uniform hypertree on $k+3$ vertices with $3$ edges.

\begin{theorem}[(Katona-Szab\'o \cite{Gyn})]\label{Tupper} If $\mathcal{F}$ is semicycle-free, then $m\leq \binom{n}{k-1}$, and this bound is asymptotically sharp for $k=3$.
\end{theorem}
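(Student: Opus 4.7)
The plan for the upper bound is to construct an injection $\phi\colon\mathcal{E}\to\binom{V}{k-1}$ with $\phi(e)\subset e$ for every $e\in\mathcal{E}$; since the codomain has size $\binom{n}{k-1}$, this yields $m\le\binom{n}{k-1}$ at once. Building $\phi$ iteratively reduces the task to the following local claim: every non-empty semicycle-free $k$-uniform hypergraph has an edge $e$ with a \emph{private} $(k-1)$-subset $T\subset e$, meaning that $T$ is contained in no edge other than $e$. Granting this claim, I would set $\phi(e)=T$, delete $e$ (which preserves semicycle-freeness, since removing edges cannot create a semicycle) and iterate on the smaller hypergraph.

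For the local claim I would take a chain $C$ of maximum length in $\mathcal{F}$, with defining vertex sequence $v_1,\ldots,v_l$ and edges $e_i=\{v_i,\ldots,v_{i+k-1}\}$ for $1\le i\le\lambda:=l-k+1$. Semicycle-freeness makes $C$ non-self-intersecting (as noted in the excerpt), so the $v_i$ are pairwise distinct. I claim the trailing $(k-1)$-subset $T=\{v_{l-k+2},\ldots,v_l\}$ of $e_\lambda$ is private. Suppose not: some $e'\ne e_\lambda$ contains $T$, and write $e'=T\cup\{w\}$; then $w\notin T$ and $w\ne v_{l-k+1}$ (otherwise $e'$ would fail to be a $k$-set or would coincide with $e_\lambda$). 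If $w\notin\{v_1,\ldots,v_l\}$, appending $w$ extends $C$ to a chain of length $\lambda+1$ whose new edge is $e'$ (distinct from each $e_i$ because $v_l\in e'$ while $v_l\notin e_i$ for $i<\lambda$), contradicting maximality. If instead $w=v_j$, the restrictions on $w$ force $j\le l-k$, and the sub-sequence $v_j,v_{j+1},\ldots,v_l,w$ closes up, has length at least $k+2$, and realises a semicycle with edges $e_j,\ldots,e_\lambda,e'$, again a contradiction.

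The delicate step, and where I expect the most care, is the second case: I must check that the purported semicycle really satisfies the formal definition — length at least $3$ (which follows from $j\le l-k$), only the prescribed coincidence $v_j=w$ in the vertex sequence (immediate from non-self-intersection of $C$), and pairwise distinct edges ($e_j,\ldots,e_\lambda$ are distinct as edges of $C$, and $e'$ differs from each earlier $e_i$ because $v_l\in e'\setminus e_i$). Thus it is precisely the structural consequence of semicycle-freeness — that chains are non-self-intersecting — that makes the whole argument close.

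Asymptotic sharpness for $k=3$ is a separate matter, requiring an explicit construction of a semicycle-free $3$-uniform hypergraph whose edge count matches $\binom{n}{2}$ in order. A natural candidate is a Steiner triple system on $n$ points chosen so that its configurations rule out every semicycle; verifying not only the absence of length-$3$ triangle configurations $\{x,a,b\},\{x,b,c\},\{x,c,a\}$ but also of longer semicycles is a distinct combinatorial task from the counting argument above and would be addressed by a design-theoretic construction rather than by the chain-maximality idea used for the upper bound.
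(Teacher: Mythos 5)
Your proof of the inequality $m\le\binom{n}{k-1}$ is correct and self-contained. The iteration does yield an injection: the $(k-1)$-set assigned to an edge at a given step is contained in no edge surviving to later steps, while every later assigned set lies inside a surviving edge, so no set is used twice. Both cases of the local claim close properly; in particular, in the second case $j\le l-k$ forces the closed-up sequence to have at least $k+2$ terms and hence at least $3$ pairwise distinct edges, so it genuinely is a semicycle, and the distinctness of $e'$ from the earlier $e_i$ is correctly justified by $v_l\in e'\setminus e_i$. Since the present paper only quotes Theorem \ref{Tupper} from \cite{Gyn} without reproving the inequality, your longest-chain argument is a legitimate independent route to the bound.

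The genuine gap is in the second half of the statement, the asymptotic sharpness for $k=3$, which you leave as a sketch — and the one concrete candidate you name cannot work. A Steiner triple system on $n$ points has exactly $\frac{1}{3}\binom{n}{2}$ edges (the paper itself records that an $S(k-1,k,n)$ system is a $1$-hypertree with $\frac{1}{k}\binom{n}{k-1}$ edges), so no design of this type gets closer than a factor of $3$ to $\binom{n}{2}$; to be sharp one must use essentially \emph{every} pair of vertices as the trace of an edge, which no packing-by-disjoint-triples construction can do. The construction that actually works is the one in Section \ref{1} (generalizing Theorem 31 of \cite{Gyn}): Lemma \ref{Lemma1} partitions $\binom{[n]}{2}$ into $O(\log n)$ classes, each covering $[n]$ and containing no short semicycle, and Lemma \ref{Lemma2} attaches a distinct new apex vertex to each class, producing a $3$-uniform hypertree on $n+O(\log n)$ vertices with $\binom{n}{2}+O(\log n)$ edges, i.e., asymptotically $\binom{n}{2}$. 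Semicycle-freeness there rests on the observation that any semicycle would have all its edges through one apex, hence bounded length, and would project to a forbidden short semicycle inside a single class — a mechanism quite different from the design-theoretic one you propose, and the part of the theorem your proposal does not supply.
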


\begin{theorem}[(Katona-Szab\'o \cite{Gyn})]\label{Tupper3} If $\mathcal{F}$ is an $l$-hypertree and $1\leq l\leq k$, then $m\leq
\frac{1}{k-l+1}\binom{n}{k-1}$.
This bound is asymptotically sharp in the case $l=2$, $k=3$.
\end{theorem}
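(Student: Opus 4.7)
The plan is to associate, to each edge $e\in\mathcal{E}$ of $\mathcal{F}$, a collection $A(e)\subseteq\binom{e}{k-1}$ of $k-l+1$ distinct $(k-1)$-subsets of $V$, in such a way that the $A(e)$ are pairwise disjoint as $e$ ranges over $\mathcal{E}$. Summing $|A(e)|=k-l+1$ over the $m$ edges immediately gives $m(k-l+1)\le|\bigcup_{e}A(e)|\le\binom{n}{k-1}$, which is the desired inequality.

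First I would reduce the existence of such an association, via Hall's marriage theorem applied to the bipartite $b$-matching between $\mathcal{E}$ and $\binom{V}{k-1}$ (adjacency by inclusion), to the shadow inequality
\[
|\partial\mathcal{A}| \;\ge\; (k-l+1)\,|\mathcal{A}|
\qquad\text{for every }\mathcal{A}\subseteq\mathcal{E},
\]
where $\partial\mathcal{A}$ denotes the collection of $(k-1)$-subsets of $V$ that lie in at least one edge of $\mathcal{A}$. Next I would prove this shadow inequality by induction on $|\mathcal{A}|$. The inductive step reduces to the structural lemma: \emph{every nonempty $\mathcal{A}\subseteq\mathcal{E}$ contains an edge $e^{\ast}$ with at most $l-1$ of its $(k-1)$-subsets shared with other edges of $\mathcal{A}$}. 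Deleting such an $e^{\ast}$ decreases $|\mathcal{A}|$ by $1$ and $|\partial\mathcal{A}|$ by at least $k-(l-1)=k-l+1$, so the inductive hypothesis on $\mathcal{A}\setminus\{e^{\ast}\}$ closes the step.

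To prove the structural lemma I argue by contradiction. Suppose every edge of $\mathcal{A}$ has at least $l$ of its $(k-1)$-subsets shared with other edges of $\mathcal{A}$. Form the ``chain graph'' $G$ on vertex set $\mathcal{A}$, two $k$-edges adjacent in $G$ whenever they share a $(k-1)$-subset; then every vertex of $G$ has degree $\ge l$. A greedy construction yields a path $v_{0},v_{1},\ldots,v_{l}$ in $G$ whose consecutive shared $(k-1)$-subsets are all distinct: at step $t\le l$ the current end-vertex $v_{t-1}$ has $\ge l-1$ shared subsets different from the backward one $v_{t-2}\cap v_{t-1}$, each providing at least one neighbor; of those neighbors, at most $t-2$ coincide with $v_{0},\ldots,v_{t-3}$, so an unvisited ``other'' neighbor always exists. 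Distinct consecutive shared subsets translate such a $G$-path into a genuine chain in $\mathcal{F}$, and the semicycle-free property of $\mathcal{F}$ (recalled in Section 1) forces the chain to be non-self-intersecting. This produces a chain of length $l+1$ in $\mathcal{F}$, contradicting the $l$-hypertree hypothesis.

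The hardest step will be the greedy extension in $G$: verifying that among the $\ge l-1$ ``other'' shared subsets of $v_{t-1}$ one can always pick a partner edge that is neither an already-used $v_{i}$ nor a configuration whose inclusion would create a semicycle. The bridge between a $G$-path with distinct consecutive shared subsets and an honest non-self-intersecting chain in $\mathcal{F}$ rests on the semicycle characterization and on the standard fact that in a semicycle-free hypergraph every chain is non-self-intersecting.
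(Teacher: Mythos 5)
Your reduction scheme is sound as far as it goes: the disjoint assignment of $(k-1)$-sets via Hall's condition, the induction on $|\mathcal{A}|$, and the reduction to the structural lemma (``some edge of $\mathcal{A}$ has at most $l-1$ shared $(k-1)$-subsets'') all fit together correctly, and in fact you do not even need Hall --- the shadow inequality applied to $\mathcal{A}=\mathcal{E}$ already gives $(k-l+1)m\le|\partial\mathcal{E}|\le\binom{n}{k-1}$. The genuine gap is exactly where you flagged it: the bridge from a $G$-path with distinct consecutive shared $(k-1)$-subsets to an honest chain in $\mathcal{F}$. Semicycle-freeness does rescue the three-edge case (two distinct consecutive intersections force either a chain of length $3$ or a semicycle of length $3$), so your argument is fine for $l\le 2$. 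But for four or more edges the implication is false. In a chain $E_1,\dots,E_r$ with vertex sequence $v_1,\dots,v_{r+k-1}$ the consecutive intersections must shift coherently, $f_{i+1}=(f_i\setminus\{v_{i+1}\})\cup\{v_{i+k}\}$, which forces $|E_i\cap E_j|=k-|i-j|$ for all pairs; mere distinctness of $f_i$ and $f_{i+1}$ controls only adjacent pairs and says nothing about, e.g., $E_1\cap E_4$.

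Concretely, take $k=4$ and $e_1=\{1,2,3,4\}$, $e_2=\{2,3,4,5\}$, $e_3=\{3,4,5,6\}$, $e_4=\{3,4,6,7\}$. The consecutive intersections $\{2,3,4\}$, $\{3,4,5\}$, $\{3,4,6\}$ are pairwise distinct, and this four-edge hypergraph is semicycle-free (a non-self-intersecting semicycle of length $3$ needs all three pairwise intersections of size $3$, which fails here, and a semicycle of length $4$ lives on at most $6$ vertices while these edges span $7$). Yet $e_1,e_2,e_3,e_4$ is not a chain of length $4$: the only ordering compatible with the size-$3$ intersections is the given one, and then $|e_1\cap e_4|=2$ instead of the required $1$; the longest chain present has length $3$. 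So such a configuration can sit inside a $3$-hypertree, and your greedy walk may produce precisely a path of this shape. Nothing in the degree hypothesis (every edge of $\mathcal{A}$ has at least $l$ shared $(k-1)$-subsets) prevents this, so the structural lemma remains unproved for $l\ge 3$, and with it the theorem. To repair the argument you would need to select, at each greedy step, not merely a shared subset distinct from the backward one, but one whose excluded vertex is the ``oldest'' vertex of the current end edge --- and it is not clear the degree hypothesis supplies such a choice; a different counting (or the argument of Katona--Szab\'o in the cited paper) is needed.
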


In Section \ref{1}, we prove the asymptotic sharpness of Theorem \ref{Tupper} for every $k\geq 2$. Our recursive construction will be a $k$-hypertree, therefore it has some consequences for Theorem \ref{Tupper3}.
After that, we show some tools and results on 2-hypertrees in Section \ref{2}. We also prove a refined upper bound in case of $l=2$.
Finally, in sections \ref{4} and \ref{6}, we turn our attention to the edge number of edge-minimal and edge-maximal hypertrees, respectively. We give lower and upper bounds for the edge number and show a construction for a sequence of edge-minimal hypertrees with asymptotically as many edges as the conjectured sharp upper bound.

%%************************************************************************

\section{Asymptotic sharpness of the upper bound of Theorem \ref{Tupper}}\label{1}

\begin{theorem}\label{Thuenk}
For every $k\geq 2$, there exists an infinite sequence of $k$-uniform hypertrees $\mathcal{H}_i^k$ with $n_i$ vertices and $e_i$ edges such that $\{n_i\}$ is strictly increasing and $e_i$ is asymptotically $\binom{n_i}{k-1}$.
\end{theorem}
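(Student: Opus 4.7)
The plan is to construct $\{\mathcal{H}_i^k\}$ by an inductive recipe: start with a base hypertree and, at each step, adjoin vertices and edges in a way that preserves both chain-connectedness and semicycle-freeness, while adding enough edges to keep pace with $\binom{n_i}{k-1}$. The case $k=2$ is immediate, since any spanning tree $T_n$ is a $2$-uniform hypertree with $n-1$ edges and $\binom{n}{1}=n$, so $(n-1)/n\to 1$.

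For $k\geq 3$, I start from a small base hypertree $\mathcal{H}_0^k$ (e.g., a non-self-intersecting chain) and obtain $\mathcal{H}_{i+1}^k$ from $\mathcal{H}_i^k$ by adjoining one new vertex $v$ together with a carefully chosen collection of new edges of the form $\{v\}\cup S$, where $S$ ranges over a family $\mathcal{S}_{i+1}$ of $(k-1)$-subsets of $V(\mathcal{H}_i^k)$. Chain-connectedness of $\mathcal{H}_{i+1}^k$ is immediate: each new edge connects $v$ to an old vertex by a length-$1$ chain, and $\mathcal{H}_i^k$ is chain-connected by induction.

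The technical heart of the argument is semicycle-freeness. The key structural observation, which also underlies Theorem~\ref{Tupper}, is that in a non-self-intersecting semicycle a single vertex belongs to at most $k$ consecutive edges. Any semicycle of $\mathcal{H}_{i+1}^k$ involving a new edge must therefore traverse $v$ in a bounded block of at most $k$ consecutive new edges, with the remainder consisting of an ``arc'' of old edges forming a chain in $\mathcal{H}_i^k$ between the endpoints of the block. Avoiding these configurations translates into a finite list of forbidden patterns on $\mathcal{S}_{i+1}$: for each short chain in $\mathcal{H}_i^k$, certain pairs of $(k-1)$-sets are ruled out. Maintaining the inductive invariant that $\mathcal{H}_i^k$ is itself a $k$-hypertree (every chain has length at most $k$) keeps the list of forbidden patterns bounded and allows the construction to be iterated.

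The asymptotic count reduces to a summation: choosing $|\mathcal{S}_{i+1}|=(1-o(1))\binom{n_i}{k-2}$ at each step yields
\[
e_I=\sum_{i=0}^{I-1}|\mathcal{S}_{i+1}|\sim\sum_{j=0}^{n_I}\binom{j}{k-2}=\binom{n_I+1}{k-1}\sim\binom{n_I}{k-1},
\]
so $e_i/\binom{n_i}{k-1}\to 1$; strict monotonicity of $\{n_i\}$ is automatic since one vertex is added per step. The main obstacle is the combinatorial design of $\mathcal{S}_{i+1}$: it must be dense enough (of size $\Theta(\binom{n_i}{k-2})$) to supply the asymptotic count, yet sparse enough to respect the forbidden configurations inherited from the short chains of $\mathcal{H}_i^k$. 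Because $|\mathcal{S}_{i+1}|$ is a vanishing fraction $(k-1)/(n_i-k+2)$ of the total number of $(k-1)$-subsets, there is ample room, and the $k$-hypertree invariant is precisely what limits the number of constraints. Finally, since every $\mathcal{H}_i^k$ is a $k$-hypertree, the construction also witnesses the asymptotic sharpness of Theorem~\ref{Tupper3} for $l=k$, which the author flags as a consequence just before the statement.
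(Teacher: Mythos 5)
Your plan is genuinely different from the paper's construction, but as written it has a gap that is exactly where the difficulty of the theorem lies: the family $\mathcal{S}_{i+1}$ is never constructed, and the assertion that ``there is ample room'' does not substitute for a proof that a family of size $(1-o(1))\binom{n_i}{k-2}$ avoiding all forbidden configurations exists at every step. Let me make the constraints concrete. Beyond requiring that $\mathcal{S}_{i+1}$, viewed as a $(k-1)$-uniform hypergraph, contain no semicycle of length at most $k$ (the all-new-edges case), every \emph{old} substructure generates prohibitions: for each old edge $f$, if $S_1,S_2\subseteq f$ are distinct $(k-1)$-subsets (so $|S_1\cap S_2|=k-2$), then $\{v\}\cup S_1$, $\{v\}\cup S_2$, $f$ form a semicycle of length $3$, so $\mathcal{S}_{i+1}$ may contain at most one $(k-1)$-subset of each old edge; each old chain of length $2,\dots,k$ similarly forbids a pair of $(k-1)$-sets. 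Your claim that the $k$-hypertree invariant ``keeps the list of forbidden patterns bounded'' conflates the \emph{length} of the old arcs (which is indeed at most $k$) with their \emph{number}: once the construction is dense, the number of old edges alone is $\Theta(n_i^{k-1})$, and the number of old chains of length $2$ is governed by $\sum_T\binom{d_T}{2}$ where $d_T$ is the codegree of the $(k-1)$-set $T$ — a quantity your inductive hypothesis does not control and which can blow up if earlier choices skew the codegrees. Moreover, preserving the $k$-hypertree invariant is itself nontrivial: a chain may concatenate a block of at most $k$ new edges with old arcs on either side, passing through several distinct apex vertices, and nothing in your setup rules out length $>k$. Each individual constraint family may well be satisfiable by a probabilistic or greedy argument (the relevant ``conflict graphs'' have bounded average degree relative to the target size), but you have verified none of them, nor the consistency of satisfying all of them simultaneously over $n$ dependent rounds.

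For contrast, the paper avoids every one of these interactions by a single structural device. It first proves (Lemma~\ref{Lemma1}) that $\binom{[n]}{k-1}$ can be partitioned into only $F(n,k-1)\le(\log n)^{k-2}$ classes, each covering $[n]$ and each free of $(k-1)$-uniform semicycles of length at most $k$; this is done by a recursive halving of the vertex set. It then adds one new apex vertex $q_i$ per class and forms the edges $e\cup\{q_i\}$, $e\in Q^i$, joining the apexes by an auxiliary hypertree (Lemma~\ref{Lemma2}). Because the classes partition $\binom{[n]}{k-1}$, two edges with different apexes intersect in at most $k-2$ vertices, so every semicycle is confined to a single apex's star and reduces to a short $(k-1)$-uniform semicycle inside one class — precisely what Lemma~\ref{Lemma1} excludes. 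All $\binom{n}{k-1}$ sets are used as edges, and only $(\log n)^{k-2}=o(n)$ vertices are added, which gives the asymptotics immediately. If you want to salvage your incremental approach, you would need, at minimum, an explicit inductive invariant bounding the codegrees $d_T$ and a counting or Lov\'asz-Local-Lemma-type argument showing each $\mathcal{S}_{i+1}$ exists; as it stands, the proposal is a programme rather than a proof.
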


Theorem \ref{Thuenk} implies that the bound of Theorem \ref{Tupper} is asymptotically sharp for all $k\geq 2$.
We call a 1-uniform hypergraph with vertex set $\{x_1,x_2,\ldots,x_{n-1}\}$, $n\geq 3$ and edge set $\{2\cdot\{x_1\},\{x_2\},\ldots,\{x_{n-1}\}\}$ (the multiplicity of the first edge is $2$) a 1-uniform semicycle of length $n$.

\begin{proofof}{Theorem \ref{Thuenk}}

The proof is divided into two lemmata. Lemma \ref{Lemma1} states that one can partition the set of $(k-1)$-subsets of $n$ into a few number of partition classes, such that no class contains a short semicycle. The second lemma constructs a suitable chain-connected hypergraph from that partition, which contains neither short nor long semicycles.

\begin{lemma}\label{Lemma1}
Let $m\geq 0$, $k\geq 2$ be positive integers and $n=2^m$ be such that $n\geq k-1$.
Then there exists a partition of the set $\binom{[n]}{k-1}$ to $F(n,k-1)\leq (\log n)^{k-2}=m^{k-2}$ classes such that every class covers $[n]$ and contains no semicycle of length at most $k$ (here and henceforth, $\log$ means $\log_2$).
\end{lemma}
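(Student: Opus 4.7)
My plan is to build the partition explicitly from the binary representations of the elements of $[n]$, using the ultrametric
\[
d(a,b):=\text{position of the highest bit in which $a$ and $b$ differ.}
\]
To each $(k-1)$-subset $S=\{a_1<a_2<\dots<a_{k-1}\}$ I assign the class $(t_1,\dots,t_{k-2})\in\{1,\dots,m\}^{k-2}$ with $t_i=d(a_i,a_{i+1})$, yielding at most $m^{k-2}$ classes. Two easy observations enter first: (a) in every non-empty class one has $t_i\ne t_{i+1}$ for all $i$, because $a_{i+1}$ cannot carry opposite bit values at a fixed position; and (b) the maximum $t^*:=\max_j t_j$ is attained at a unique index $j^*$, because an ``L--R--L'' transition at the top level would force some pair $a_q,a_{q+1}$ out of sorted order.

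For the covering property, given a non-empty class and a vertex $v\in[n]$, I would construct a $(k-1)$-subset containing $v$ directly: fix the bits above level $t^*$ to agree with those of $v$ in every element, flip the bit at level $t_i$ at each sorted step, and choose lower bits freely to preserve the sorted order. This is a routine check.

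The heart of the argument is semicycle-freeness. Assume, for contradiction, that a semicycle of length $\ell\leq k$ lies in some class; by the earlier recalled fact it may be taken non-self-intersecting, with distinct vertices $v_1,\dots,v_{L-1}$, $v_L=v_1$ and $L\leq 2k-2$, and edges $e_i=\{v_i,\dots,v_{i+k-2}\}$. Since all edges share the profile, sorted $e_i$ has exactly $j^*$ vertices inside the ultrametric ball of radius $t^*-1$ around $v_1$ and $k-1-j^*$ outside. Each sliding swap $v_i\leftrightarrow v_{i+k-1}$ must preserve both counts, so $v_i$ and $v_{i+k-1}$ lie in the same cluster; chasing this equivalence around the cycle using $v_L=v_1$ piles more vertices inside one of the clusters than its internal subcluster structure can carry. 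Already the model case $\ell=3$ shows this: the two same-cluster elements of each edge are forced to be an adjacent pair at a specific lower profile level, but only finitely many such pairs exist, so three pairwise distinct vertices of that cluster cannot satisfy all three edge constraints simultaneously.

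The main obstacle is turning this cluster-tracking into an airtight contradiction uniformly across $\ell$, $j^*$ and $k$: after the first descent one is left with a residual configuration inside a cluster of smaller diameter, and I expect the completed proof to proceed by an inner induction on $t^*$ (or equivalently on the diameter of the vertex set), repeatedly using the two-valuedness of the deciding bit to collapse the residual to fewer vertices than it must contain. That iterated pigeonhole is where the real work of the proof lies.
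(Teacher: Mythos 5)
Your construction is, up to unrolling the recursion, the same one the paper uses: the paper partitions $\binom{[n]}{k-1}$ by repeatedly halving $[n]$ along the top bit, and the class of an edge in that recursion is determined precisely by the trie shape of its elements, i.e.\ by your profile $(t_1,\dots,t_{k-2})$ of highest-differing-bit positions between consecutive sorted elements. Your observations (a) and (b) are correct, the resulting bound of at most $m^{k-2}$ nonempty classes is fine, and the covering property, though only sketched, is indeed a routine top-down construction. So the object you define is the right one.

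The genuine gap is semicycle-freeness, and you say so yourself: ``that iterated pigeonhole is where the real work of the proof lies.'' This is not a peripheral verification to be deferred --- it is the entire content of the lemma, since producing \emph{some} covering partition into $m^{k-2}$ classes is easy. Your first step is sound: all vertices of a putative short semicycle $C$ agree on the bits above $t^*$ (each edge's elements do, and consecutive edges overlap), and since every edge has exactly $j^*$ elements on the $0$-side of bit $t^*$, the sliding step forces $v_i$ and $v_{i+k-1}$ to carry the same bit there. But ``piles more vertices inside one of the clusters than its internal subcluster structure can carry'' is not an argument, and the $\ell=3$ model case is asserted rather than proved. The way to finish --- and what the paper actually does --- is not a pigeonhole within one cluster but a projection: restrict every edge of $C$ to the side of the level-$t^*$ split containing $v_1$, delete repetitions, and check (using the fact that consecutive edges of $C$ differ in a single vertex) that the restrictions form a $j^*$-uniform non-self-intersecting semicycle $C'$ whose vertex order is inherited from that of $C$. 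Because $C$ has length at most $k$, its first and last edges together cover all of its vertices; the same then holds for $C'$, which forces $C'$ to have length at most $j^*+1$, contradicting the inductive hypothesis applied to the truncated profile (equivalently, to the lower-uniformity classes on one half). Until you carry out this reduction, or an equivalent one, the lemma is not proved.
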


\begin{proof}
We define the desired partition by a recursive construction.

Let $\mathcal{Q}_{n,k-1}=(Q_{n,k-1}^1,Q_{n,k-1}^2,\ldots,Q_{n,k-1}^{F(n,k-1)})$ denote the partition cor\-re\-spond\-ing to $\binom{[n]}{k-1}$, where $Q_{n,k-1}^i\subseteq \binom{[n]}{k-1}$ are the partition classes of $\mathcal{Q}_{n,k-1}$.

If $k=2$, then the partition has one class, $Q_{n,1}^1=\binom{[n]}{1}$. In this case $F(n,1)=1=(\log n)^{0}$, $Q_{n,1}^1$ covers $[n]$ and contains no semicycle of length at most 2 (a semicycle must have at least 3 edges even if the hypergraph is 1-uniform).

If $k\geq3$ and $n=k-1$, then the partition has also one class, $Q_{k-1,k-1}^1=\{[n]\}$. The statement of the theorem holds: $F(k-1,k-1)=1\leq (\log (k-1))^{k-2}$ because $k-1\geq 2$, $Q_{k-1,k-1}^1$ covers $[n]$ and contains no semicycle at all.

We define $\mathcal{Q}_{n,k-1}$ to be the empty set if $1\leq n<k-1$. In this case $F(n,k-1)=0\leq (\log n)^{k-2}$.

In any other case ($2<k\leq n=2^m$), assume that $\mathcal{Q}_{n',k'-1}$ is defined for all $k'<k$ and $n'$ or $k'=k$ and $n'<n$, where $n'$ is a power of 2.

We split $[n]$ into two parts, $V_1$ and $V_2$, each of size $n/2$. By induction, for every $1\leq\lambda\leq k-1$, there exist the appropriate partitions $\mathcal{Q}_{n/2,\lambda}^1=\{Q_{n/2,\lambda}^{1,1},\\\ldots,Q_{n/2,\lambda}^{1,F(n/2,\lambda)}\}$ and $\mathcal{Q}_{n/2,\lambda}^2=\{Q_{n/2,\lambda}^{2,1},\ldots,Q_{n/2,\lambda}^{2,F(n/2,\lambda)}\}$ of $V_1$ and $V_2$, respectively.

Let $\mathcal{Q}_{n,k-1}^*=\{Q_{n/2,k-1}^{1,1}\cup Q_{n/2,k-1}^{2,1},Q_{n/2,k-1}^{1,2}\cup Q_{n/2,k-1}^{2,2}, \ldots,Q_{n/2,k-1}^{1,F(n/2,k-1)}\cup Q_{n/2,k-1}^{2,F(n/2,k-1)}\}$ (if $n/2<k-1$, then $\mathcal{Q}_{n,k-1}^*=\emptyset$) and $\mathcal{Q}_{n,k-1}^\lambda=\{Q_{n/2,\lambda}^{1,i}\times Q_{n/2,k-1-\lambda}^{2,j}: 1\leq i \leq F(n/2,\lambda), 1\leq j \leq F(n/2,k-1-\lambda)\}$, where $A\times B$ denotes $\{a\cup b: a\in A, b\in B\}$ for convenience.

We define $\mathcal{Q}_{n,k-1}=\mathcal{Q}_{n,k-1}^*\cup\left(\bigcup_{\lambda=1}^{k-2} \mathcal{Q}_{n,k-1}^\lambda\right)$. We show that $\mathcal{Q}_{n,k-1}$ meets the conditions of the theorem.

First, we show that $\mathcal{Q}_{n,k-1}$ is a partition of $\binom{[n]}{k-1}$.

The classes of $\mathcal{Q}_{n,k-1}$ are disjoint:

If $Q_1$, $Q_2$ are two partition classes, $e\in Q_1\cap Q_2$, $|e\cap V_1|=\lambda$ and $0<\lambda<k-1$, then $Q_1,Q_2\in \mathcal{Q}_{n,k-1}^\lambda$ and there exist $i_1,i_2,j_1,j_2$ such that $Q_1=Q_{n/2,\lambda}^{1,i_1}\times Q_{n/2,k-1-\lambda}^{2,j_1}$, $Q_2=Q_{n/2,\lambda}^{1,i_2}\times Q_{n/2,k-1-\lambda}^{2,j_2}$. It means that $e\cap V_1\in Q_{n/2,\lambda}^{1,i_1}, Q_{n/2,\lambda}^{1,i_2}$, $e\cap V_2\in Q_{n/2,k-1-\lambda}^{2,j_1}, Q_{n/2,k-1-\lambda}^{2,j_2}$ hence $i_1=i_2$ and $j_1=j_2$ because $\mathcal{Q}_{n/2,\lambda}^1$ and $\mathcal{Q}_{n/2,k-1-\lambda}^2$ were partitions by induction. Thus $Q_1=Q_2$.

If $|e\cap V_1|=k-1$, then $Q_1,Q_2\in \mathcal{Q}_{n,k-1}^*$ and there exist $i,j$ such that $Q_1=Q_{n/2,k-1}^{1,i}\cup Q_{n/2,k-1}^{2,i}$, $Q_2=Q_{n/2,k-1}^{1,j}\cup Q_{n/2,k-1}^{2,j}$. It implies $e\in Q_{n/2,k-1}^{1,i}, Q_{n/2,k-1}^{1,j}$ and so $i=j$ because $\mathcal{Q}_{n/2,k-1}^1$ was a partition. Thus $Q_1=Q_2$. The case $|e\cap V_1|=0$ is similar.

Every edge $e$ is contained in a partition class.

Let $e_1=e\cap V_1$, $e_2=e\cap V_2$. If $|e_1|=\lambda$ and $0<\lambda<k-1$, then there exist classes $Q_{n/2,\lambda}^{1,i}\in\mathcal{Q}_{n/2,\lambda}^1$ and $Q_{n/2,k-1-\lambda}^{2,j}\in\mathcal{Q}_{n/2,k-1-\lambda}^2$ such that $e_1\in Q_{n/2,\lambda}^{1,i}$ and $e_2\in Q_{n/2,k-1-\lambda}^{2,j}$ because $\mathcal{Q}_{n/2,\lambda}^1$ and $\mathcal{Q}_{n/2,k-1-\lambda}^2$ were partitions. Therefore, $e\in Q_{n/2,\lambda}^{1,i}\times Q_{n/2,k-1-\lambda}^{2,j}\in\mathcal{Q}_{n,k-1}^\lambda$.

If $|e_1|=k-1$, then $e_1\in Q_{n/2,k-1}^{1,i}$ for some index $i$ because $\mathcal{Q}_{n/2,k-1}^1$ was a partition. Hence, $e_1\in Q_{n/2,k-1}^{1,i}\cup Q_{n/2,k-1}^{2,i}\in \mathcal{Q}_{n,k-1}^*$. The case $|e_1|=0$ is similar.

Let us continue with $F(n,k-1)=|\mathcal{Q}_{n,k-1}|\leq (\log n)^{k-2}$.
\begin{eqnarray*}
|\mathcal{Q}_{n,k-1}| & = & |\mathcal{Q}_{n,k-1}^*|+\sum_{\lambda=1}^{k-2}|\mathcal{Q}_{n,k-1}^\lambda|\\
& = & F(n/2,k-1)+\sum_{\lambda=1}^{k-2} F(n/2,\lambda)F(n/2,k-1-\lambda).
\end{eqnarray*}
By induction, this is at most
\begin{IEEEeqnarray*}{rCl}
\IEEEeqnarraymulticol{3}{l}{
(\log n/2)^{k-2}+\sum_{\lambda=1}^{k-2} (\log n/2)^{\lambda-1}(\log n/2)^{k-\lambda-2}
}\nonumber\\ \quad
& = & (\log n/2)^{k-2}+(k-2)(\log n/2)^{k-3}\\
& \leq & \sum_{i=0}^{k-2} \binom{k-2}{i}(\log n/2)^i\\
& \leq & (\log n/2+1)^{k-2}\\
& \leq & (\log n)^{k-2}.
\end{IEEEeqnarray*}

Every class of $\mathcal{Q}_{n/2,\lambda}^1$ and $\mathcal{Q}_{n/2,\lambda}^2$ covers $V_1$ and $V_2$ respectively, so every class $Q$ of $\mathcal{Q}_{n,k-1}$ covers $[n]$ either $Q\in\mathcal{Q}_{n,k-1}^*$ or $Q\in\mathcal{Q}_{n,k-1}^\lambda$.

Finally, we show that there is no class $Q$ of $\mathcal{Q}_{n,k-1}$ containing a semicycle of length at most $k$.

By the induction hypothesis, $\mathcal{Q}_{n,k-1}^*$ does not contain such semicycle.
Suppose that there is a short semicycle $C$ in some $Q\in \mathcal{Q}_{n,k-1}^\lambda$. We can assume that $C$ is non-self-intersecting (if there is a self-intersecting semicycle in a hypergraph, then there is a shorter non-self-inter\-sect\-ing one) and the first vertex of $C$ is in $V_1$. Project all of the edges of $C$ to $V_1$, delete the multiple edges, and denote the $\lambda$-uniform hypergraph obtained in this way by $C'$.

It's easy to see that $C'$ would be a $\lambda$-uniform, non-self-intersecting semicycle. 
Let $v_1,\ldots, v_l$ be the vertices of $C$ in the natural order (i.e., every $k-1$ consecutive vertices form an edge of $C$ and $v_1=v_l$) and let $u_i$ denote the $i$th vertex in this sequence that comes from $V_1$. Now, $V(C')=\{u_1, u_2,\ldots, u_{l'}\}$, where $u_1=v_1=v_l=u_{l'}$. It is enough to show that $\mathcal{E}(C')=\{\{u_i, u_{i+1}, \ldots, u_{i+\lambda-1}\} : 1\leq i\leq l'-\lambda+1\}$.
Obviously, $e\cap V_1$ is of the form $\{u_i, u_{i+1}, \ldots, u_{i+\lambda-1}\}$, for every edge $e$ of $C$. Every two consecutive edges of $C$ differ in only one vertex, hence this is true for the edges of $C'$. It proves our claim. $C'$ is non-self intersecting, because $C$ is non-self-intersecting.

The union of the first and last edges of $C$ covers all of its vertices because $C$ is a $(k-1)$-uniform semicycle of length at most $k$. This also holds for $C'$, and due to the non-self-intersecting property, the length of $C'$ is at most $\lambda+1$, which is a contradiction because $C'$ is a subhypergraph of $Q_{n/2,\lambda}^{1,i}$ for some $i$, and it could not contain a semicycle of length at most $\lambda+1$. $\square$
\end{proof}

\begin{lemma}\label{Lemma2}
Let $l=F(n,k-1)$, $n,m,k$ and $\mathcal{Q}_{n,k-1}=\{Q^1, Q^2, \ldots, Q^l\}$ be as in Lemma \ref{Lemma1}, and let $\mathcal{F}_{n,k}=(U_{n,k},\mathcal{D}_{n,k})$ be a hypertree, where $U_{n,k}=\{q_1, q_2, \ldots, q_l\}$ and $[n]$ are disjoint sets. Furthermore, let $V_{n,k}=[n]\cup U_{n,k}$ and $\mathcal{E}_{n,k}=\bigcup_{i=1}^l \{e\cup\{q_i\}: e\in Q^i\}\cup\mathcal{D}_{n,k}$. Then the hypergraph $\mathcal{H}_{n,k}=(V_{n,k}, \mathcal{E}_{n,k})$ is a $k$-uniform hypertree.
\end{lemma}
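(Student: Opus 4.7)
The proof checks the three defining properties of a $k$-uniform hypertree in turn: $k$-uniformity, chain-connectedness and semicycle-freeness. The first is automatic because every edge of the form $e \cup \{q_i\}$ with $e \in Q^i$ has exactly $k$ vertices, and the edges of $\mathcal{D}_{n,k}$ are $k$-edges since $\mathcal{F}_{n,k}$ is a $k$-uniform hypertree. The crucial preliminary observation is this: along any chain or semicycle in $\mathcal{H}_{n,k}$, two consecutive edges share $k-1$ vertices, so the number of $U_{n,k}$-vertices they contain changes by at most one. Since an edge of $\mathcal{H}_{n,k}$ either contains exactly one vertex of $U_{n,k}$ (call it \emph{external}, of the form $e \cup \{q_i\}$) or $k$ such vertices (call it \emph{internal}, an edge of $\mathcal{D}_{n,k}$), any chain or semicycle is entirely external or entirely internal; moreover, in the external case the unique $U_{n,k}$-vertex must be the same $q_i$ throughout, because two external edges sharing $k-1$ vertices must share their only $U_{n,k}$-vertex.

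Chain-connectedness then splits into three easy cases. Two vertices of $U_{n,k}$ are joined by a chain in the hypertree $\mathcal{F}_{n,k}$. A vertex $v \in [n]$ and $q_i$ are joined by the single edge $e \cup \{q_i\}$ for any $e \in Q^i$ containing $v$, which exists because $Q^i$ covers $[n]$ by Lemma~\ref{Lemma1}. For $v, u \in [n]$ with $k \geq 3$, any $(k-1)$-subset $S \subseteq [n]$ containing both $v$ and $u$ lies in a unique class $Q^i$ since $\mathcal{Q}_{n,k-1}$ is a partition of $\binom{[n]}{k-1}$, and then $S \cup \{q_i\}$ is a one-edge chain through $v$ and $u$. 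When $k = 2$ we have $l = 1$ and $\mathcal{H}_{n,2}$ is simply a star centered at $q_1$, so the two-edge chain $v, q_1, u$ connects any two vertices of $[n]$.

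The main obstacle is semicycle-freeness. Internal semicycles are forbidden because $\mathcal{F}_{n,k}$ is a hypertree, so it suffices to rule out external semicycles. Consider one with defining sequence $v_1, \ldots, v_l$, $v_1 = v_l$, every edge containing a fixed $q_i$; then $q_i$ must appear in every window $\{v_j, \ldots, v_{j+k-1}\}$. Two occurrences of $q_i$ in the sequence cannot be at distance less than $k$ (they would lie in a common window, contradicting $k$-uniformity of that edge) nor at distance more than $k$ (then some window would contain no $q_i$), so any two consecutive occurrences would have to be at distance exactly $k$. However, in that case the two edges $\{v_j, \ldots, v_{j+k-1}\}$ and $\{v_{j+1}, \ldots, v_{j+k}\}$ straddling the transition satisfy $v_j = v_{j+k} = q_i$ and therefore coincide as sets, violating the distinct-edge requirement. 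Since $q_i$ appears in at least one window, it follows that $q_i$ occurs exactly once in the defining sequence, which forces $l \leq 2k-1$ and hence the external semicycle has length at most $k$. Projecting by deleting the unique $q_i$ from the defining sequence then produces a non-self-intersecting $(k-1)$-uniform semicycle of length at most $k$ inside $Q^i$, contradicting Lemma~\ref{Lemma1}. The delicate step is the duplicate-edge observation pinning down the multiplicity of $q_i$; once it is in place, the reduction to Lemma~\ref{Lemma1} is immediate.
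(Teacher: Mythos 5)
Your proof is correct and follows essentially the same route as the paper's: the same three-case chain-connectedness argument, and the same reduction of semicycle-freeness to Lemma~\ref{Lemma1} by showing that every edge of a putative semicycle carries the same label $q_i$, bounding its length by $k$, and projecting $q_i$ away. Two remarks: your assertion that two consecutive external edges must share their $U_{n,k}$-vertex silently relies on the disjointness of the partition classes (otherwise $e\cup\{q_i\}$ and $e\cup\{q_j\}$ with $i\neq j$ would be a legitimate consecutive pair sharing the $k-1$ vertices of $e$; the paper spells this out), while conversely your occurrence-distance analysis showing that $q_i$ appears exactly once in the defining sequence — and hence that the semicycle has length at most $k$ — is a welcome elaboration of a step the paper merely asserts.
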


The set $U_{n,k}$ can be understood as a set of labels for the partition classes.
We construct $\mathcal{E}_{n,k}$ by labeling the edges of $\mathcal{Q}_{n,k-1}$ with a label from $U_{n,k}$, recording the class the edge belongs to.

\begin{proof}
\noindent $(1)$ chain-connectedness:

Let $u, v\in U_{n,k}$ be distinct vertices. Then they are connected by a chain because $\mathcal{F}_{n,k}$ is a hypertree.

If $u,v\in [n]$ and $k-1\geq 2$, then there exists a $(k-1)$-set $e\subseteq [n]$ containing them. This set is in $Q^i$ for some $i$, so $e\cup\{q_i\}$ is a chain of length 1 of $\mathcal{H}_{n,k}$ between $u$ and $v$.

If $u,v\in [n]$ and $k-1=1$, then $\{u\},\{v\}\in Q^1$, so $uq_1v$ is a chain between the two vertices.

In the case of $u\in [n]$ and $v=q_i$, there exists an edge $e\in Q^i$ such that $u\in e$ because $Q^i$ covers $[n]$. Hence, $u$ and $v$ are connected by the edge $e\cup \{q_i\}$.

\noindent $(2)$ Semicycle-freeness:

Assume to the contrary that $\mathcal{H}_{n,k}$ contains a semicycle denoted by $C$. An edge from $\mathcal{F}_{n,k}$ cannot be an edge of $C$ because otherwise $C$ would lie entirely in $\mathcal{F}_{n,k}$ in contradiction with the hypertree property. Now, every edge of $C$ contains exactly one vertex from $U_{n,k}$. Let $q_i$ denote such a vertex in the first edge. If the second edge contains $q_j$, where $j\neq i$, then the intersection with the first edge is of size at most $k-2$ (because these edges cannot be identical without $q_i$, otherwise they would correspond to the same partition class), which is a contradiction. By induction, this implies that every edge of $C$ contains $q_i$. This means that the length of $C$ is at most $k$. The $(k-1)$-uniform subhypergraph $C'$ obtained from $C$ by removing $q_i$ is clearly a semicycle of length at most $k$ in $Q^i$ ($q_i$ cannot be the initial vertex of $C$), which is impossible according to Lemma \ref{Lemma1}. $\square$
\end{proof}

Now, let $\mathcal{H}_i^k=\mathcal{H}_{2^i,k}=(V_{2^i,k},\mathcal{E}_{2^i,k})$, $n_i=|V_{2^i,k}|$ and $e_i=|\mathcal{E}_{2^i,k}|$. Then $n_i=2^i+F(2^i,k-1)$ and $e_i=\binom{2^i}{k-1}+|\mathcal{D}_{n,k}|$. By Theorem \ref{Tupper}, $F(2^i,k-1)\leq i^{k-2}$ and $e_i\geq \binom{2^i}{k-1}$ implies that $e_i$ is asymptotically $\binom{n_i}{k-1}$. $\square$
\end{proofof}

We remark that $\mathcal{H}_i^k$ is a $k$-hypertree: $\mathcal{H}_i^k$ contains no chain of length at least $k+1$, since the edges of any chain in $\mathcal{H}_i^k$ have a vertex in common. This proves the asymptotic sharpness of Theorem \ref{Tupper3} for $l=k$. It means that excluding long chains (of length at least $k+1$) has no effect on the asymptotic behaviour of the maximal edge number.

For $k=2$, $\mathcal{H}_i^k$ is a star, while for $k=3$, we get back $\mathcal{B}(\mathcal{F})$, the construction of Theorem $31$ from \cite{Gyn}, where $\mathcal{F}=\mathcal{F}_{n,3}$ of our Lemma \ref{Lemma2}.

%%************************************************************************

\section{Results on 2-hypertrees}\label{2}

Theorem \ref{Tupper3} gives an upper bound for the number of edges of $l$-hypertrees which is conjectured to be sharp in asymptotic sense. In the following, we discuss 2-hypertrees and a corresponding equation called Star-equation, which is based on the star-decomposition of $2$-hypertrees.

The $k$-uniform hypergraph $\mathcal{S}_n$ of order $n$ is a \textit{(tight) star} if $n\geq k$, and all of the edges contain $k-1$ fixed vertices $u_1, u_2, \ldots, u_{k-1}$.
We call $\{u_1,u_2, \ldots, u_{k-1}\}$ the \textit{kernel} of the star.

It is easy to see that every star is an edge-minimal 2-hypertree with $n-k+1$ edges. 

\begin{lemma}[Star-decomposition]\label{Tsdisj}
If $\mathcal{H}=(V,\mathcal{E})$ is a $k$-uniform 2-hypertree, then any two distinct maximal stars of $\mathcal{H}$ are edge-disjoint.
\end{lemma}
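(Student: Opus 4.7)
The plan is to argue by contradiction: assume that two distinct maximal stars $\mathcal{S}_1$ and $\mathcal{S}_2$ of $\mathcal{H}$ share some edge $e$, and exhibit a chain of length $3$ or a semicycle of length $3$ in $\mathcal{H}$, both of which are forbidden in a $2$-hypertree.

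The first step would be to analyse the two kernels. Let $K_i \subseteq e$ be the kernel of $\mathcal{S}_i$; each $K_i$ is a $(k-1)$-subset of the $k$-set $e$. If $K_1 = K_2$, then by inclusion-maximality each $\mathcal{S}_i$ equals $\{f \in \mathcal{E} : K_1 \subseteq f\}$, forcing $\mathcal{S}_1 = \mathcal{S}_2$ and contradicting distinctness. Hence $K_1 \neq K_2$; since any two distinct $(k-1)$-subsets of a $k$-set differ in exactly one vertex, we get $|K_1 \cap K_2| = k-2$ and $K_1 \cup K_2 = e$. I would relabel the vertices of $e$ as $v_1, v_2, \ldots, v_k$ so that $K_1 = \{v_1, \ldots, v_{k-1}\}$ and $K_2 = \{v_2, \ldots, v_k\}$.

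Next I would dispose of the single-edge corner case. If, say, $\mathcal{S}_1 = \{e\}$, then maximality forbids any edge $f \neq e$ with $|f \cap e| = k-1$; but any $e_2 \in \mathcal{S}_2 \setminus \{e\}$ contains $K_2$, as does $e$, so $|e_2 \cap e| \geq k-1$, which is a contradiction unless also $\mathcal{S}_2 = \{e\}$, and then $\mathcal{S}_1 = \mathcal{S}_2$. So both stars may be assumed to contain at least two edges. Now pick $e_1 \in \mathcal{S}_1 \setminus \{e\}$ and $e_2 \in \mathcal{S}_2 \setminus \{e\}$. Since $K_1 \subseteq e_1$ and $e_1 \neq e$, we have $e_1 \cap e = K_1$, hence $e_1 = K_1 \cup \{c\}$ for some $c \notin e$; symmetrically $e_2 = K_2 \cup \{d\}$ for some $d \notin e$. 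Consider the sequence
\[
c,\; v_1,\; v_2,\; \ldots,\; v_k,\; d
\]
of length $k+2$: its three consecutive $k$-windows are precisely the distinct edges $e_1$, $e$, $e_2$. If $c \neq d$, this is a chain of length $3$, contradicting the $2$-hypertree condition. If $c = d$, the first and last terms coincide and we get a semicycle of length $3$, contradicting semicycle-freeness. Either way we reach a contradiction.

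The real obstacle is the bookkeeping in the first two steps — showing that the two kernels can be aligned into a common labelling of $e$ and ruling out the single-edge degeneracy; once both stars genuinely contain at least two edges, reading off consecutive $k$-tuples of $c, v_1, \ldots, v_k, d$ produces the forbidden configuration almost automatically.
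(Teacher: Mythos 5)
Your proof is correct and follows essentially the same route as the paper's: assume a shared edge $e$, show the two kernels are distinct $(k-1)$-subsets of $e$, pick edges $e_1$, $e_2$ of the two stars other than $e$, and observe that $e_1, e, e_2$ form a chain of length $3$ or a semicycle of length $3$ according to whether the two extending vertices differ or coincide. Your explicit handling of the single-edge star case and the vertex relabelling are just a more detailed version of the paper's remark that otherwise one star would contain the other.
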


\begin{proof} Let $\mathcal{C}_1=(V_1,\mathcal{E}_1)$ and $\mathcal{C}_2=(V_2,\mathcal{E}_2)$ be two distinct maximal stars of $\mathcal{H}$. It means that $\mathcal{C}_1$ (and similarly $\mathcal{C}_2$) is a subhypergraph of $\mathcal{H}$, which is a star, and any star which contains $\mathcal{C}_1$ as a subhypergraph is identical to $\mathcal{C}_1$.

Assume to the contrary that these two stars share an edge $e$.

There exist edges $e_1\in\mathcal{E}_1$ and $e_2\in\mathcal{E}_2$, both distinct from $e$, otherwise, one star would contain the other. By the definition of star, $|e\cap e_1|=|e\cap e_2|=k-1$. The kernels of $\mathcal{C}_1$ and $\mathcal{C}_2$ are $e\cap e_1$ and $e\cap e_2$, respectively. A maximal star is uniquely determined by its kernel, so $e\cap e_1\neq e\cap e_2$. However, in this case, $e_1$, $e$ and $e_2$ together form either a semicycle of length $3$ or a path of length $3$ (depending on whether $e_1\backslash e=e_2\backslash e$ or not), which is a contradiction. $\square$
\end{proof}

\begin{cor}
If $\mathcal{H}=(V,\mathcal{E})$ is a $k$-uniform 2-hypertree, then there is a unique decomposition of $\mathcal{E}$ into edge-disjoint maximal stars. 
\end{cor}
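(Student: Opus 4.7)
My plan is to deduce the existence and uniqueness of the decomposition from Lemma \ref{Tsdisj} almost directly; the only gap left to fill is the observation that every edge of $\mathcal{H}$ belongs to at least one maximal star.

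For existence, I will argue that every single edge $e \in \mathcal{E}$ already constitutes a $k$-uniform star of order $k$ (fix any $(k-1)$-subset of $e$ as kernel and take the subhypergraph whose edge set is $\{e\}$). By finiteness of $\mathcal{H}$, this star is contained in at least one maximal star of $\mathcal{H}$ obtained by repeatedly adjoining further edges sharing the same kernel. Hence the union of all maximal stars of $\mathcal{H}$ already covers $\mathcal{E}$, and pairwise edge-disjointness of distinct maximal stars is exactly Lemma \ref{Tsdisj}. Combining these two observations, the family of all maximal stars of $\mathcal{H}$ forms an edge-partition of $\mathcal{E}$, which is the required decomposition.

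For uniqueness, I would suppose that $\mathcal{E} = \mathcal{E}(S_1) \sqcup \cdots \sqcup \mathcal{E}(S_r)$ is any partition into edge-disjoint maximal stars. Each $S_i$ is by assumption a maximal star of $\mathcal{H}$. Conversely, for an arbitrary maximal star $T$ of $\mathcal{H}$, pick an edge $e \in \mathcal{E}(T)$; it lies in $\mathcal{E}(S_i)$ for a unique index $i$, so $T$ and $S_i$ are two maximal stars of $\mathcal{H}$ sharing $e$. The contrapositive of Lemma \ref{Tsdisj} then forces $T = S_i$. Thus the given decomposition must coincide with the family of all maximal stars of $\mathcal{H}$, proving uniqueness.

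There is no serious obstacle here; the corollary is essentially a repackaging of Lemma \ref{Tsdisj}. The only mild subtlety is the degenerate case in which $e$ alone cannot be enlarged (no other edge of $\mathcal{H}$ shares a $(k-1)$-subset with $e$), so that the kernel of the one-edge star is not uniquely determined. This causes no harm for the edge-partition statement, since as a subhypergraph the maximal star in question is still uniquely the singleton $\{e\}$, and uniqueness is asserted at the level of edge sets.
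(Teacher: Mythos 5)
Your argument is correct and follows essentially the same route as the paper: extend each edge to a maximal star (which exists by finiteness) and invoke Lemma \ref{Tsdisj} for edge-disjointness, with the uniqueness part being an immediate consequence of the same lemma. The paper's own proof is just a one-line version of exactly this, so no further comment is needed.
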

\begin{proof}
Every edge can be extended to a maximal star, and these stars are edge-disjoint due to Lemma \ref{Tsdisj}. $\square$
\end{proof}

Let $C_i$ and $l$ denote the number of stars with $i$ edges in the star-decomposition and the number of uncovered $(k-1)$-subsets of $V$, respectively. 

\begin{theorem}[(Star-equation)]\label{Tstar}
If $\mathcal{H}=(V,\mathcal{E})$ is a $k$-uniform 2-hypertree, then $$|\mathcal{E}|=\frac{1}{k-1}\binom{n}{k-1}-\frac{1}{k-1}\sum_{i=1}^{n-k+1} C_i-\frac{l}{k-1}.$$
\end{theorem}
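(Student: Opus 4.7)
The plan is a double-counting argument on the $(k-1)$-subsets of $V$. Writing $N$ for the number of covered $(k-1)$-subsets (those contained in at least one edge of $\mathcal{H}$), I would split $\binom{n}{k-1} = l + N$ and then compute $N$ by attributing each covered $(k-1)$-subset to the unique maximal star that contains it through one of its edges, as provided by the Corollary to Lemma \ref{Tsdisj}.

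The central step is to verify that this attribution is well-defined. Suppose a $(k-1)$-subset $S$ were contained in two edges $e_1 \in \mathcal{C}_1$ and $e_2 \in \mathcal{C}_2$ with $\mathcal{C}_1 \neq \mathcal{C}_2$. By Lemma \ref{Tsdisj} the maximal stars $\mathcal{C}_1, \mathcal{C}_2$ are edge-disjoint, so $e_1 \neq e_2$; but then $S \subseteq e_1 \cap e_2$ forces $e_1 \cap e_2 = S$ with $|S| = k-1$, so $\{e_1, e_2\}$ is itself a star with kernel $S$, and extending it to a maximal star would yield a maximal star containing both $e_1$ and $e_2$, again contradicting edge-disjointness. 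Hence each covered $S$ belongs to the edge-set of exactly one maximal star.

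Next I would count the distinct $(k-1)$-subsets covered by a single maximal star $\mathcal{C}$ with $i$ edges. Writing $\mathcal{C}$ with kernel $K$ and leaves $v_1, \ldots, v_i$ (choosing any $K \subset e$ in the degenerate case $i = 1$), the edges are $e_j = K \cup \{v_j\}$, and their $(k-1)$-subsets are $K$ itself (common to all $i$ edges) together with $(K \setminus \{u\}) \cup \{v_j\}$ for $u \in K$ and $1 \le j \le i$. The latter $i(k-1)$ subsets are pairwise distinct (they are identified by the pair $(u, v_j)$), and distinct from $K$, so $\mathcal{C}$ covers exactly $1 + i(k-1)$ distinct $(k-1)$-subsets.

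Summing over the star-decomposition and using $\sum_i i\, C_i = |\mathcal{E}|$ gives
\[
\binom{n}{k-1} \;=\; l \,+\, \sum_{i=1}^{n-k+1} C_i\bigl(1 + i(k-1)\bigr) \;=\; l \,+\, \sum_{i=1}^{n-k+1} C_i \,+\, (k-1)\,|\mathcal{E}|,
\]
and solving for $|\mathcal{E}|$ yields the Star-equation. The only nontrivial point is the uniqueness of assignment in the second paragraph; everything else is direct bookkeeping.
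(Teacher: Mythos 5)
Your proof is correct and follows essentially the same double-counting of $(k-1)$-subsets via the star-decomposition as the paper; the paper just organizes the covered subsets into ``kernels'' ($\sum C_i$ of them, one per maximal star) and ``non-kernels'' (each covered by a unique edge), whereas you attribute each covered subset to a unique maximal star and count $1+i(k-1)$ per star, which amounts to the same bookkeeping. No gaps.
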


\begin{proof}
First, we assign a kernel to every maximal star of $\mathcal{H}$. If a maximal star has at least 2 edges, then there is a natural choice of the kernel: it is the intersection of the edges. If a maximal star has only one edge, then we choose an arbitrary $(k-1)$-subset of the edge to be the kernel of it.

These star-kernels are pairwise distinct: if two maximal stars share the same kernel, then we can merge them to a larger star, which is impossible due to the maximality.

We count the $(k-1)$-subsets of $V$. Such a subset can be uncovered, a star kernel or covered, but not a star kernel.
The number of uncovered $(k-1)$-sets is $l$. The number of star kernels is equal to the number of maximal stars, which is $\sum_{i=1}^{n-k+1} C_i$.

Let us refer to the remaining $(k-1)$-sets as non-kernels, for simplicity.
Only one edge covers a non-kernel, otherwise, it would be the kernel of a maximal star. Every edge belongs to exactly one maximal star due to Lemma \ref{Tsdisj}. Hence, every non-kernel is a non-kernel of a uniquely determined maximal star. On the other hand, every non-kernel of a maximal star is a non-kernel of $\mathcal{H}$. So, the number of non-kernels is the sum of the number of non-kernels of maximal stars, which is $\sum_{i=1}^{n-k+1} (k-1)iC_i$.

Summing up the three cases, we have $$\binom{n}{k-1}=l+\sum_{i=1}^{n-k+1} C_i+(k-1)\sum_{i=1}^{n-k+1} iC_i.$$
On the other hand, $\sum_{i=1}^{n-k+1} iC_i=|\mathcal{E}|$ because every edge belongs to exactly one maximal star. $\square$
\end{proof}

The star-equation shows that if a sequence of 2-hypertrees reaches the upper bound of Theorem \ref{Tupper3}, then $l+\sum C_i$ must be $o(n^{k-1})$, or in other words we should cover almost all $(k-1)$-sets with a relatively few number of stars.
It is an interesting open philosophical question, whether we should use a block-design type construction with almost equally-sized stars or an imbalanced construction with some large stars as well as small ones filling the remaining gaps to reach the asymptotic upper bound. 

It turns out that one can refine the upper bound of Theorem \ref{Tupper3} in case of 2-hypertrees with a term of order $k-2$ by the help of the star-equation.

\begin{theorem}\label{Tupp2hyp}
If $\mathcal{H}=(V,\mathcal{E})$ is a $k$-uniform 2-hypertree, then $|\mathcal{E}|\leq \frac{1}{k-1}\binom{n}{k-1}-\frac{1}{(k-1)^3}\binom{n}{k-2}$.
\end{theorem}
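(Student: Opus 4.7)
The plan is to combine the Star-equation (Theorem~\ref{Tstar}) with a trivial upper bound on the size of a single maximal star. Every star of $\mathcal{H}$ has a kernel $K$ of size $k-1$, and each of its edges has the form $K\cup\{v\}$ for some $v\in V\setminus K$, so a maximal star contains at most $n-k+1$ edges. Since the star-decomposition partitions $\mathcal{E}$,
\[
|\mathcal{E}|=\sum_{i=1}^{n-k+1} iC_i \leq (n-k+1)\sum_{i=1}^{n-k+1} C_i,
\]
and therefore $\sum C_i+l\geq \sum C_i\geq |\mathcal{E}|/(n-k+1)$.

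Substituting this lower bound into the Star-equation, written in the form $(k-1)|\mathcal{E}|+(\sum C_i+l)=\binom{n}{k-1}$, yields
\[
\left[(k-1)+\frac{1}{n-k+1}\right]|\mathcal{E}|\leq \binom{n}{k-1},
\]
that is, $|\mathcal{E}|\leq \frac{(n-k+1)\binom{n}{k-1}}{(k-1)(n-k+1)+1}$.

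A short algebraic comparison finishes the proof. Using the identity $\binom{n}{k-1}=\frac{n-k+2}{k-1}\binom{n}{k-2}$, the claimed inequality $|\mathcal{E}|\leq \frac{1}{k-1}\binom{n}{k-1}-\frac{1}{(k-1)^3}\binom{n}{k-2}$ reduces to $(k-1)(n-k+2)\geq (k-1)(n-k+1)+1$, i.e.\ to $k-1\geq 1$, which holds for every $k\geq 2$. There is no real obstacle here: the only observation required is that the crude bound $\sum C_i\geq |\mathcal{E}|/(n-k+1)$, substituted into the Star-equation, already delivers the refinement, and no further structural information about 2-hypertrees beyond the star-decomposition is needed. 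In fact the above chain yields a slightly sharper inequality than the stated one (the two coincide only at $k=2$), so the theorem's displayed bound may be regarded as a clean, slightly relaxed form of what the argument naturally produces.
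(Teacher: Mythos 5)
Your argument is correct and is essentially the paper's own proof: both rest on the star-decomposition, the crude bound $\sum_i C_i\geq |\mathcal{E}|/(n-k+1)$ obtained from $|\mathcal{E}|=\sum_i iC_i$, discarding the $l$ term, and then solving the resulting inequality for $|\mathcal{E}|$. Your explicit closed form $|\mathcal{E}|\leq (n-k+1)\binom{n}{k-1}/\bigl((k-1)(n-k+1)+1\bigr)$ and the clean reduction to $k-1\geq 1$ are, if anything, a slightly tidier presentation of the same computation.
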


\begin{proof}
We use the simple fact that $\sum_{i=1}^{n-k+1} C_i\geq \frac{1}{n-k+1} |\mathcal{E}|$, which follows from $|\mathcal{E}|=\sum_{i=1}^{n-k+1} iC_i$ and $\sum_{i=1}^{n-k+1} iC_i\leq (n-k+1)\sum_{i=1}^{n-k+1} C_i$.

Comparing it to the star-equation, we get
\begin{eqnarray*}
|\mathcal{E}| & \leq & \frac{1}{k-1}\binom{n}{k-1}-\frac{1}{(k-1)(n-k+1)} |\mathcal{E}|-\frac{1}{k-1} l\nonumber\\
 & \leq & \frac{1}{k-1}\binom{n}{k-1}-\frac{1}{(k-1)(n-k+1)} |\mathcal{E}|,\nonumber
\end{eqnarray*}
which implies, that
\begin{eqnarray*}
|\mathcal{E}| & \leq & \left((k-1)+\frac{1}{(n-k+1)}\right)^{-1} \binom{n}{k-1}.\nonumber\\
& \leq & \left(\frac{1}{k-1}-\frac{1}{(k-1)^2(n-k+1)+(k-1)}\right) \binom{n}{k-1}\nonumber\\
& \leq & \left(\frac{1}{k-1}-\frac{1}{(k-1)^2(n-k+2)}\right)\binom{n}{k-1}\nonumber\\
& \leq & \frac{1}{k-1}\binom{n}{k-1}-\frac{1}{(k-1)^3}\binom{n}{k-2}\nonumber
\end{eqnarray*}
$\square$
\end{proof}

%\begin{proof} %%% Wrong!
%We use the simple fact that $\sum_{i=1}^{n-k+1} C_i\geq \frac{1}{n-k+1} |\mathcal{E}|$, which follows from $|\mathcal{E}|=\sum_{i=1}^{n-k+1} iC_i$ and $\sum_{i=1}^{n-k+1} iC_i\leq (n-k+1)\sum_{i=1}^{n-k+1} C_i$.
%
%Comparing it to the star-equation, we get
%\begin{eqnarray*}
%|\mathcal{E}| & \leq & \frac{1}{k-1}\binom{n}{k-1}-\frac{1}{(k-1)(n-k+1)} |\mathcal{E}|-\frac{1}{k-1} l\nonumber\\
% & \leq & \frac{1}{k-1}\binom{n}{k-1}-\frac{1}{(k-1)(n-k+1)} |\mathcal{E}|,\nonumber\\
%% which implies that $$|\mathcal{E}|\leq \left((k-1)+\frac{1}{(n-k+1)}\right)^{-1} \binom{n}{k-1}.$$ $$\left((k-1)+\frac{1}{(n-k+1)}\right)^{-1}=\frac{1}{k-1}-\frac{1}{(k-1)^2(n-k+1)+(k-1)}$$ $$\leq \frac{1}{k-1}-\frac{1}{(k-1)^2(n-k+2)},\text{ so }$$
% & \leq & \frac{1}{k-1} \binom{n}{k-1}-\frac{1}{(k-1)^2(n-k+1)}\binom{n}{k-1}\nonumber\\
% & \leq & \frac{1}{k-1} \binom{n}{k-1}-\frac{1}{(k-1)^2(n-k+2)}\binom{n}{k-1}\nonumber\\
% & = & \frac{1}{k-1} \binom{n}{k-1}-\frac{1}{(k-1)^3}\binom{n}{k-2}.\nonumber
%\end{eqnarray*} $\square$
%\end{proof}

Theorem \ref{Tupp2hyp} shows that in case of $l=2$ the bound of Theorem \ref{Tupper3} cannot be a tight, hence further improvements of the upper bound is needed. This is likely to be true for greater values of $l$ as well.

A $t$-$(n,k,\lambda)$ \textit{design}, in our terminology, is a $k$-uniform hypergraph on $n$ vertices, where every $t$-element subset of vertices is contained in exactly $\lambda$ edges. Though, no general way is known to decide whether a block design exists for a certain combination of parameters, the size of the design can be easily determined by its parameters: a $t$-$(n,k,\lambda)$ \textit{design} has exactly $\frac{\lambda}{\binom{k}{t}}\binom{n}{t}$ edges. An $S(k-1,k,n)$ Steiner system is a $(k-1)$-$(n,k,1)$ design.

It is easy to see that every $S(k-1,k,n)$ Steiner system is a $k$-uniform $1$-hypertree, if $k>2$: every $(k-1)$-subset of $V$ is contained in exactly $1$ edge, which ensures chain-connectedness ($k>2$ is needed here) and makes chains of length at least 2 impossible. These hypertrees have $\frac{1}{k}\binom{n}{k-1}$ edges by the above formula. Because every $1$-hypertree is in fact a $2$-hypertree, the existence of infinitely many $S(k-1,k,n)$ Steiner system for a fixed $k$ implies the existence of a sequence of $k$-uniform $2$-hypertrees with asymptotically $\frac{1}{k}\binom{n}{k-1}$ edges. Fortunately, this existence theorem was proved by Hanani for $k=4$ in 1960 \cite{quadruple}, and by Keevash in general in 2014 \cite{keevash}. The bound $\frac{1}{k}\binom{n}{k-1}$ is called the trivial lower bound for the edge number of $k$-uniform $2$-hypertrees and will be improved for $k=4$ by our forthcoming construction. Notice that it is already really close to the upper bound of Theorem \ref{Tupper3} obtained in \cite{Gyn}.
The fact that it counts as the ``trivial'' lower bound from our viewpoint shows the difficulty of the topic very well. If we want to know everything about hypertrees, we have to know everything about balanced incomplete block designs, which is known to be a challenging research area with long history.

Now, we show a general method to construct a $k$-uniform $2$-hypertree with high edge number from a given $S(k-1,k,n)$ Steiner system. We will apply it in Theorem \ref{Tordext} in order to improve the trivial lower bound in the $4$-uniform case.

Let $\mathcal{H}=(V,\mathcal{E})$ and $\mathcal{G}=(V,\mathcal{F})$ be a $k$-uniform and a $(k-1)$-uniform hypergraph, respectively, on the same vertex-set. We say that $\mathcal{H}$ is an \textit{extension} of $\mathcal{G}$ if every edge of $\mathcal{H}$ contains an edge of $\mathcal{G}$.

The edge $e\in\mathcal{E}$ is an \textit{extension} of $f\in\mathcal{F}$ (equivalently, $f$ is a \textit{kernel} of $e$) if $f\subset e$. We say that $f_1$ and $f_2$ are \textit{mutually extended} edges if \mbox{$|f_1\cap f_2|=k-3$}, and there exist $v_1\in f_1$ and  $v_2\in f_2$ such that $f_1\cup\{v_2\}\in\mathcal{E}$ and $f_2\cup\{v_1\}\in\mathcal{E}$.

\begin{lemma}[Extension]\label{Textension}
If $\mathcal{G}=(V,\mathcal{F})$ is an $S(k-2,k-1,n)$ Steiner system, and $\mathcal{H}=(V,\mathcal{E})$ is a chain-connected extension of $\mathcal{G}$ that does not contain mutually extended edges, then $\mathcal{H}$ is a $k$-uniform $2$-hypertree.
\end{lemma}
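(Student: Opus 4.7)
The plan is to verify the two outstanding properties---semicycle-freeness and the chain-length bound of~$2$---since chain-connectedness is a hypothesis. The opening observation is that the Steiner system property forces each $e\in\mathcal{E}$ to contain a unique kernel $f(e)\in\mathcal{F}$: two distinct $(k-1)$-subsets of the same $k$-set share $k-2$ vertices, and placing that $(k-2)$-set inside two different edges of $\mathcal{G}$ is forbidden by $S(k-2,k-1,n)$.

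The key lemma I would establish next is that whenever $e,e'\in\mathcal{E}$ satisfy $|e\cap e'|=k-1$, the kernels coincide, $f(e)=f(e')=e\cap e'$. Suppose not, and write $f:=f(e)\neq f':=f(e')$. Uniqueness of kernels gives $f'\not\subset e$ and $f\not\subset e'$; combined with $|e\setminus e'|=|e'\setminus e|=1$, this pins down $f\cap(e\cap e')=(e\cap e')\setminus\{a\}$ and $f'\cap(e\cap e')=(e\cap e')\setminus\{b\}$, with $a\neq b$ (otherwise $|f\cap f'|\geq k-2$, contradicting the Steiner system). Unpacking these set identities yields $e=f\cup\{a\}$ with $a\in f'\setminus f$, $e'=f'\cup\{b\}$ with $b\in f\setminus f'$, and $|f\cap f'|=k-3$---exactly the mutually extended configuration excluded by the hypothesis.

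With this lemma in hand, both remaining properties fall out from the same one-line identification on the vertex sequence. Any semicycle in $\mathcal{H}$ would contain a non-self-intersecting one $e_1,\ldots,e_s$ (with $s\geq 3$) on a vertex sequence $v_1,\ldots,v_{s+k-1}$ in which only $v_1=v_{s+k-1}$ is repeated; the lemma equates every $e_i\cap e_{i+1}$ to a common kernel $f$, so the equality $f=\{v_2,\ldots,v_k\}=\{v_3,\ldots,v_{k+1}\}$ forces $v_2=v_{k+1}$, a contradiction. Hence $\mathcal{H}$ is semicycle-free, so every chain is non-self-intersecting, and applying the same equality to three consecutive edges of any chain of length~$3$ again yields $v_2=v_{k+1}$. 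The principal obstacle is the consecutive-edges lemma itself, which is the sole place where the mutually extended hypothesis is used and demands careful bookkeeping of which vertices of $e\cap e'$ sit in each kernel to produce the forbidden configuration; once it is available, the semicycle exclusion and the chain-length bound are immediate.
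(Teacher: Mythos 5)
Your proposal is correct and follows essentially the same route as the paper: establish uniqueness of kernels from the $S(k-2,k-1,n)$ property, show that a $(k-1)$-element intersection of two edges must be their common kernel (with the mutually extended hypothesis supplying the contradiction in exactly the configuration you describe), and then rule out semicycles and chains of length $3$ by comparing consecutive intersections inside the middle edge. The only cosmetic difference is that the paper derives the final contradiction from two distinct kernels meeting in $k-2$ points, whereas you derive it from a repeated vertex in a non-self-intersecting sequence; both are immediate.
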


\begin{proof}
$\mathcal{H}$ is chain-connected by assumption. It is enough to show that $\mathcal{H}$ does not contain a semicycle of length 3 nor a chain of length 3 (a semicycle or a chain of length at least 4 contains a chain of length 3).

First, we note that every edge $e$ of $\mathcal{H}$ has a unique kernel. If $f_1, f_2\in\mathcal{F}$ are two distinct kernels of $e$, then $|f_1\cap f_2|=k-2$, which contradicts the definition of $\mathcal{F}$.

We claim that if two edges of $\mathcal{H}$ \---say, $e_1$ and $e_2$\--- intersect in a set of size $k-1$, then the intersection will be a kernel. There certainly exist $f_1, f_2\in\mathcal{F}$ such that $f_1\subset e_1$ and $f_2\subset e_2$ because $\mathcal{H}$ is an extension of $\mathcal{G}$. If $f_1, f_2\neq e_1\cap e_2$, then there exist vertices $u_1\in (e_1\cap e_2)\backslash f_1$ and $u_2\in (e_1\cap e_2)\backslash f_2$. Clearly, $f_1\cup \{u_1\}=e_1$ and $f_2\cup \{u_2\}=e_2$. Actually, $u_1\in f_2$, otherwise $f_2=e_2\backslash\{u_1\}$, and so $f_1\cap f_2=(e_1\cap e_2)\backslash\{u_1\}$ is a set of size $(k-2)$, which contradicts the properties of $\mathcal{G}$. Similarly, $u_2\in f_1$ and $f_1\cap f_2=(e_1\cap e_2)\backslash\{u_1, u_2\}$, thus $|f_1\cap f_2|=k-3$. In fact, we have just proved that $f_1$ and $f_2$ are mutually extended kernels, which is a contradiction.

Finally, if $P$ is a chain or a semicycle of length $3$ of $\mathcal{H}$ with edges $e_1, e_2$ and $e_3$, then $f_1=e_1\cap e_2$ and $f_2=e_2\cap e_3$ would be kernels intersecting each other in a set of size $k-2$, which is impossible. $\square$
\end{proof}

The simplest method to build an extension of a given $S(k-2,k-1,n)$ Steiner system is the ordered extension, where the vertex set is ordered linearly, and every edge is extended with vertices that are greater than the greatest vertex in the original edge. We must mention though that other, non-ordered extension methods may yield better constructions.

We indicate the ordering of the vertices by a permutation. We want to emphasize that the following construction works with every permutation, but it is not irrelevant which one have been chosen because it can strongly affect the number of edges.

\begin{lemma}[Ordered extension]\label{Cordext}
Let $\mathcal{G}=(V,\mathcal{F})$ be as in Lemma \ref{Textension}, $|V|=n$ and $(v_1, v_2, \ldots, v_n)$ be a permutation of the vertices such that $\{v_1,v_2, \ldots, v_{k-1}\}\in\mathcal{F}$. Furthermore let
\begin{itemize}
\item $\mathcal{F}_i=\{e\in\mathcal{F} : e\subseteq \{v_1, v_2,\ldots, v_i\},\, v_i\in e\}$, for $i=1,2,\ldots n$;
\item $\mathcal{E}_i=\{f\cup\{v_j\}:f\in\mathcal{F}_i, j>i\}$, for $i=k-1,k,\ldots, n-1$ and $\mathcal{E}=\bigcup_{i={k-1}}^{n-1}\mathcal{E}_i$.
\end{itemize}
Then $\mathcal{H}=(V,\mathcal{E})$ is a $k$-uniform $2$-hypertree.
\end{lemma}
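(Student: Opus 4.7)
The plan is to verify the three hypotheses of Lemma \ref{Textension}: that $\mathcal{H}$ is $k$-uniform, is a chain-connected extension of $\mathcal{G}$, and contains no mutually extended edges. The first will be an immediate book-keeping check: every $e \in \mathcal{E}$ has the form $f \cup \{v_j\}$ with $f \in \mathcal{F}_i \subseteq \mathcal{F}$ and $j > i$, so $v_j \notin f$, $|e| = k$, and $e$ contains the kernel $f \in \mathcal{F}$.

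For chain-connectedness I will appeal to the base case of the construction. Since $\{v_1, \ldots, v_{k-1}\} \in \mathcal{F}$ by assumption and this is the only $(k-1)$-subset of $\{v_1, \ldots, v_{k-1}\}$, we have $\mathcal{F}_{k-1} = \{\{v_1, \ldots, v_{k-1}\}\}$, so $\mathcal{E}_{k-1}$ contains the complete star with kernel $\{v_1, \ldots, v_{k-1}\}$ and all $n-k+1$ possible outer vertices. This star alone covers $V$ and is chain-connected (any two of its edges share the kernel and thus form a chain of length two), so $\mathcal{H}$ is chain-connected.

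The main obstacle is ruling out mutually extended edges, where I will use the ordering crucially. The key observation is that each $e \in \mathcal{E}$ has a unique kernel $f \subseteq e$ with $f \in \mathcal{F}$ (otherwise two distinct kernels of $e$ would share $k-2$ vertices, contradicting the Steiner property of $\mathcal{G}$), and by construction the extension vertex $e \setminus f = v_j$ has strictly larger index than every vertex of $f$. So the canonical extension vertex of $e$ is simply its maximum-index vertex. Suppose now that $f_1, f_2 \in \mathcal{F}$ are mutually extended via $u \in f_1$ and $w \in f_2$. From $f_1 \cup \{w\} \in \mathcal{E}$ the index of $w$ must exceed that of every vertex of $f_1$, in particular that of $u$; symmetrically, from $f_2 \cup \{u\} \in \mathcal{E}$ the index of $u$ must exceed that of $w$. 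This is a contradiction, so Lemma \ref{Textension} applies and $\mathcal{H}$ is a $k$-uniform $2$-hypertree.
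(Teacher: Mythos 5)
Your proposal is correct and follows essentially the same route as the paper's own proof: it verifies the hypotheses of Lemma \ref{Textension} by noting that $\mathcal{E}_{k-1}$ is a spanning star with kernel $\{v_1,\ldots,v_{k-1}\}$ (giving chain-connectedness), and rules out mutually extended edges via the unique-kernel property and the fact that the extension vertex of each edge has strictly larger index than every vertex of its kernel. Your phrasing of the latter as a symmetric contradiction (index of $w$ exceeds index of $u$ and vice versa) is a slightly cleaner packaging of the paper's argument but not a different idea.
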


\begin{proof}
We will check that the assumptions of Lemma \ref{Textension} are satisfied. We note that $\mathcal{F}_i=\emptyset$ if $i<k-1$, $\mathcal{F}_{k-1}=\{\{v_1,v_2,\ldots,v_{k-1}\}\}$ and $\mathcal{F}=\bigcup_{i={k-1}}^{n} \mathcal{F}_i$.

First, we show that $\mathcal{H}$ is an extension of $\mathcal{G}$.
Assume to the contrary that $f_1,f_2\in\mathcal{F}$ are mutually extended edges, $u_1\in f_1$, $u_2\in f_2$ such that $e_1=f_1\cup\{u_2\}\in\mathcal{E}$ and $e_2=f_2\cup\{u_1\}\in\mathcal{E}$. By the definition of $\mathcal{E}$, there exist indices $k-1\leq i, j\leq n-1$ such that $e_1\in\mathcal{E}_{i}$, $e_2\in\mathcal{E}_{j}$, and so there are $g_1\in\mathcal{F}_i$ and $g_2\in\mathcal{F}_j$ such that $g_1\subset e_1$ and $g_2\subset e_2$. But now $g_1=f_1$ and $g_2=f_2$ because every edge of $\mathcal{H}$ has a unique kernel. Without loss of generality, we can assume that $i\leq j$. If $u_1=v_l$ for some $1\leq l\leq n$, then $l>j$ comes from $e_2\in\mathcal{E}_j$ and $f_2=g_2$. However, $u_1\in f_1$ and $f_1=g_1\in\mathcal{F}_i$ implies $l\leq i$, which is a contradiction.

Now, we show chain-connectedness of $\mathcal{H}$. Let $\{v_1, v_2,\ldots v_{k-1}\}=f^*$ and $e_l=f^*\cup \{v_l\}$, for $l=k,k+1,\ldots, n$. Then $\mathcal{E}^*=\{ e_l: l\geq k\}$ is a subset of $\mathcal{E}$ and forms a $k$-uniform star on $V$, hence $\mathcal{H}$ is chain connected.

Finally, we can apply Lemma \ref{Textension} to finish the proof. $\square$
\end{proof}

Now, we show our the best construction for 4-uniform 2-hypertrees.

\begin{theorem}\label{Tordext}
There exists a sequence of 4-uniform 2-hypertrees with asymptotic edge number $\frac{2}{7}\binom{n}{3}$.
\end{theorem}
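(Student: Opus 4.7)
The plan is to apply Lemma \ref{Cordext} to a projective Steiner triple system. Fix a large integer $d$, let $n=2^d-1$, and take $\mathcal{G}=\mathrm{PG}(d-1,2)$: its points are the nonzero vectors of $\mathrm{GF}(2)^d$ and its lines are the 3-sets $\{x,y,x+y\}$, which makes it exactly the $S(2,3,n)$ Steiner system demanded by Lemma \ref{Cordext} with $k=4$. Fix a maximal flag of linear subspaces $\{0\}=H_0\subsetneq H_1\subsetneq\cdots\subsetneq H_d=\mathrm{GF}(2)^d$ with $\dim H_j=j$, and order the $n$ vertices so that $H_j\setminus\{0\}$ always appears as an initial segment $\{v_1,\ldots,v_{2^j-1}\}$. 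Since $\{v_1,v_2,v_3\}=H_2\setminus\{0\}$ is a line, Lemma \ref{Cordext} applies and produces a 4-uniform 2-hypertree $\mathcal{H}$.

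To count $|\mathcal{E}(\mathcal{H})|$, let $t_i$ denote the number of lines of $\mathcal{G}$ contained in $\{v_1,\ldots,v_i\}$. Then $|\mathcal{F}_i|=t_i-t_{i-1}$, and a summation-by-parts argument collapses $\sum_{i=3}^{n-1}|\mathcal{F}_i|(n-i)$ to $\sum_{i=3}^{n-1}t_i$. The key structural observation is that for $i=2^{j-1}-1+p$ with $0\le p\le 2^{j-1}$, the set $\{v_1,\ldots,v_i\}$ consists of $H_{j-1}\setminus\{0\}$ together with $p$ points of the coset $H_j\setminus H_{j-1}$. Using that the sum of any two elements of $H_j\setminus H_{j-1}$ lies in $H_{j-1}$, a short case analysis shows every line contained in this set either lies entirely in $H_{j-1}$ or has the form $\{y,z,y+z\}$ for two chosen coset points $y,z$, giving
\[ t_{2^{j-1}-1+p}=\frac{(2^{j-1}-1)(2^{j-1}-2)}{6}+\binom{p}{2}. \]

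Summing over $p=1,\ldots,2^{j-1}$ for fixed $j$ and applying the hockey-stick identity $\sum_{p=1}^{m}\binom{p}{2}=\binom{m+1}{3}$, the contribution of this block of $i$-values equals $2^{j-1}(2^{j-1}-1)(2^j-1)/6$. Summing over $j=2,\ldots,d$ (and subtracting the lower-order correction $t_n=n(n-1)/6$) yields a geometric series whose leading term is $8^d/21\sim n^3/21=\frac{2}{7}\binom{n}{3}$. Hence the subsequence $\{\mathcal{H}\}_{d\ge 2}$ achieves the claimed asymptotic edge number. The principal obstacle is the case analysis behind the closed form for $t_i$: one must rule out lines with two vertices in $H_{j-1}$ and one in the coset, and lines lying entirely in the coset, both of which fail because $H_{j-1}$ is closed under addition and because the sum of two coset elements falls back into $H_{j-1}$. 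Once this combinatorial step is settled, the remaining work is routine evaluation of a geometric series.
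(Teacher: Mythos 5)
Your proposal is correct and follows essentially the same route as the paper: the Steiner triple system constructed there from nested blocks and edge-disjoint perfect matchings is exactly your $\mathrm{PG}(d-1,2)$ with the subspace-flag ordering, and Lemma \ref{Cordext} is applied in the same way. Your count via $t_i$ and summation by parts is a repackaging of the paper's direct evaluation of $\sum_i|\mathcal{F}_i|(n-i)$ with $|\mathcal{F}_i|=i-2^j$ on each block, and both give the leading term $8^d/21\sim\frac{2}{7}\binom{n}{3}$.
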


\begin{proof}
We use a well-known construction for Steiner triple systems. Let $n=2^m-1$, $V=\{v_1,\ldots,v_n\}$ and $V_j=\{v_i : 2^{j}\leq i\leq 2^{j+1}-1\}$, for $j=0, 1,\ldots, m-1$. Obviously, $V=\bigcup_{j=0}^{m-1} V_j$ and $|V_j|=n_j=2^j$.

We can find $2^{j}-1$ edge-disjoint perfect matchings on $V_j$ denoted by $M_j^1, M_j^2,\\\ldots, M_j^{n_j-1}$. Let $\mathcal{E}_{st}=\bigcup_{j=1}^{m-1}\bigcup_{i=1}^{n_j-1}\bigcup_{P\in M_j^i} \{P\cup\{v_i\}\}$ and $\mathcal{F}_{st}=(V,\mathcal{E}_{st})$. Now, $\mathcal{F}_{st}$ is a Steiner triple system for every fixed $m$.

We use Lemma \ref{Cordext} with $\mathcal{G}=\mathcal{F}_{st}$ and vertex sequence $\{v_1,v_2,\ldots,v_n\}$ to obtain a $4$-uniform $2$-hypertree $\mathcal{H}=(V,\mathcal{E})$.

$|\mathcal{E}|=\sum_{i=3}^{n-1} |\mathcal{E}_i|=
\sum_{i=3}^{n-1} |\mathcal{F}_i|(n-i)=\sum_{j=1}^{m-1} \sum_{i=2^j}^{2^{j+1}-1} (i-2^j)(2^m-1-i)\sim \sum_{j=0}^{m-1} \sum_{i=2^j}^{2^{j+1}} (i-2^j)(2^m-i) \sim
\sum_{j=0}^{m-1} \sum_{i=0}^{2^j} i(2^m-2^j-i) \sim \sum_{j=0}^{m-1} \sum_{i=0}^{2^j} (-i^2+i(2^m-2^j))\sim \sum_{j=0}^{m-1} \left( -\frac{1}{3} 2^{3j}+\frac{1}{2}2^m 2^{2j}-\frac{1}{2}2^{3j}\right) \sim
\sum_{j=0}^{m-1} \left(-\frac{5}{6} 2^{3j}+\frac{1}{2}2^m 2^{2j}\right) \sim -\frac{5}{6} \frac{8^m}{7}+\frac{1}{2}2^m \frac{4^m}{3} \sim
\frac{1}{21} 8^m\sim \frac{2}{7}\binom{n}{3}$. $\square$
\end{proof}

We note that the extension process can be used to reach the optimal asymptotic bound in the $3$-uniform case. For the detailed construction refer to Section \ref{6}.

%%************************************************************************

\section{Edge-minimal hypertrees}\label{4}

In this section we turn our attention to the edge-minimal hypertrees. We concentrate on the upper bounds of the edge number and give an interesting construction for a sequence of edge-minimal hypertrees with asymptotic edge number $\frac{1}{k-1}\binom{n}{2}$. Based on that construction, we establish a conjecture about the asymptotic upper bound on the maximal number of edges. Finally, we show that any asymptotic upper bound of the form $\alpha\binom{n}{2}$ is indeed an upper bound for every $n$.

Before we continue, we remark that an edge-minimal hypertree has at least $n-k+1$ edges if $n\geq (k-1)^2$, and this bound is tight. This is a simple consequence of Theorem \ref{Tlower} and the fact that every non-self-intersecting chain is an edge-minimal hypertree.

\begin{theorem}\label{Tedgemin}
There exists an infinite sequence $\{\mathcal{H}_{m}\}$ of $k$-uniform edge-minimal hypertrees with $n_m$ vertices and $e_m$ edges such that $\{n_m\}$ is strictly increasing and $e_m$ is asymptotically $\frac{1}{k-1}\binom{n_m}{2}$.
\end{theorem}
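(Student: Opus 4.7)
My plan is to exhibit an explicit sequence $\{\mathcal{H}_m\}$ of $k$-uniform edge-minimal hypertrees matching the stated asymptotic edge count. The natural starting point is to partition $V$ into $b\approx n/(k-1)$ blocks $B_1,\ldots,B_b$ of size $k-1$ and to take edges only of the star shape $B_i\cup\{v\}$ with $v$ lying in some other block. If one orients the pairs of blocks by a tournament and, for every directed pair $B_i\to B_j$, includes all $k-1$ edges $B_i\cup\{v\}$ with $v\in B_j$, the total number of edges is $(k-1)\binom{b}{2}\sim\frac{1}{k-1}\binom{n}{2}$, which is the target asymptotic.

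Chain-connectedness follows directly from the design: two vertices of the same block lie inside any one of its star edges, while two vertices $u\in B_i$ and $v\in B_j$ of different blocks sit jointly in the star edge $B_i\cup\{v\}$ (or $B_j\cup\{u\}$) dictated by the orientation. For semicycle-freeness the key observation is that two edges $B_i\cup\{v\}$ and $B_{i'}\cup\{v'\}$ from stars with different kernels intersect in at most $k-2$ vertices (the intersection only uses the possible cross terms $v\in B_{i'}$ and $v'\in B_i$), so every two consecutive edges of a chain of $\mathcal{H}_m$ must share a common kernel block. A single star with kernel $B_i$ cannot host a non-self-intersecting semicycle, because its $k-1$ kernel vertices are forced into every one of its edges and would have to repeat in the defining sequence of the semicycle.

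The hard step is edge-minimality, where for each edge $e=B_i\cup\{v\}$ with $v\in B_j$ I must produce a pair of vertices whose every connecting chain uses $e$. Since chains live inside a single star, the only way $e$ can fail to be essential for a pair $(u,v)$ with $u\in B_i$ is via a chain of length two through some star $B_l$ whose block dominates both $B_i$ and $B_j$ in the orientation. The construction must therefore be arranged so that no such common dominator exists for the relevant directed pair--- a condition on which naive transitive or random tournaments fail. My expectation is that the proof uses a carefully layered or nested orientation, possibly in the spirit of the ordered extension in Lemma~\ref{Cordext}, producing a canonical witness pair $(u^*,v^*)$ for every edge; this combinatorial design is the technical heart of the argument. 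With such a design in place, chain-connectedness, semicycle-freeness and edge-minimality are each verified in turn, and the edge count is computed directly, giving $e_m\sim\frac{1}{k-1}\binom{n_m}{2}$.
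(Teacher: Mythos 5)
Your construction has a genuine gap at exactly the step you flag as "the technical heart," and unfortunately no choice of tournament can close it. Write $b=n/(k-1)$ for the number of blocks. For the edge $e=B_i\cup\{v\}$ with $v\in B_j$ to be essential, the only viable witness pair is $(u,v)$ with $u\in B_i$ (any pair inside $B_i$ is joined by another edge of the same star, and any pair of extra vertices of the star $B_i$ from distinct blocks is joined by a single edge of one of their own stars). As you observe, $(u,v)$ stays connected after deleting $e$ whenever some block $B_l$ dominates both $B_i$ and $B_j$, via the length-$2$ chain $B_l\cup\{u\},\,B_l\cup\{v\}$. So edge-minimality forces: for every arc $i\to j$ of the tournament, $i$ and $j$ have no common in-neighbour. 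Equivalently, for every block $l$ the out-neighbourhood $N^+(l)$ contains no arc; but in a tournament every pair of vertices spans an arc, so $|N^+(l)|\leq 1$ for all $l$. Summing out-degrees gives $\binom{b}{2}\leq b$, i.e.\ $b\leq 3$. Hence for every tournament on $b\geq 4$ blocks your hypergraph has a removable edge, and the sequence cannot be edge-minimal. The obstruction is structural: as long as a kernel's apex vertices are spread over several mutually "comparable" blocks, some apex pair creates a bypass.

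The paper's construction avoids this by inverting the roles. It takes an $(l+1)\times m$ grid of vertices with $l=\binom{m-1}{k-2}$, uses Baranyai's theorem to split $\binom{[m]}{k-1}$ into $l$ pairwise disjoint perfect matchings $B_1,\dots,B_l$, assigns $B_i$ to row $i$, and declares the edges to be $\{v_{ij}\}\cup\{v_{rs_1},\dots,v_{rs_{k-1}}\}$ with $i<r$ and $\{s_1,\dots,s_{k-1}\}\in B_i$. Thus each kernel is a $(k-1)$-set inside a row, and all apexes of a given kernel lie in a \emph{single} upper row; a bypassing length-$2$ chain for the pair $(v_{ij},v_{rs_h})$ would require one column set to belong to both $B_i$ and $B_r$, contradicting disjointness of the matchings. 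Your chain-connectedness and semicycle-freeness arguments (every chain lives in one star, stars are $2$-hypertrees) are essentially the ones the paper uses, and your edge count is correct for your construction; but to repair the proof you would have to replace the fixed $(k-1)$-blocks and the tournament by something equivalent to this row-plus-factorization design, which is a different combinatorial mechanism rather than a cleverer orientation.
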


\begin{proof}
Let $m$ be a positive integer divisible by $k-1$, $k\geq 3$ and $\mathcal{H}_m=(V_m,\mathcal{E}_m)$ be the $k$-uniform hypergraph defined as follows.

Let $V_m=\{v_{ij}: 1\leq i\leq l+1, 1\leq j\leq m\}$, where $l=\binom{m-1}{k-2}$, and let $n=(l+1)m$. $V_m$ can be understood as an $(l+1)\times m$ grid of vertices.

Since $m$ is divisible by $k-1$, by Baranyai's theorem \cite{baranyai}, there exists a 1-factorisation of the $(k-1)$-uniform complete hypergraph $\mathcal{K}_m^{(k-1)}$ on the set $[m]=\{1,\ldots,m\}$ into $l$ partitions. Let $B_1, B_2, \ldots, B_l$ denote these partitions. So, we know that for all $r\neq s$, $|B_r|=\frac{m}{k-1}$, $B_r\cap B_s=\emptyset$ and $\bigcup_{i=1}^l B_i =\binom{[m]}{k-1}$.

Let $\mathcal{H}_m$ be the hypergraph whose edges are all of the $k$-sets of the form $\{v_{ij}, v_{rs_1},v_{rs_2}, \ldots,v_{rs_{k-1}}\}$, where $1\leq j\leq m$, $1\leq i\leq l$, $i< r\leq l+1$ and $\{s_1, s_2, \ldots, s_{k-1}\} \in B_i$. We can imagine this such that for every $i$, $1\leq i\leq l$ a partition (namely $B_i$) is assigned to row $i$.
Projecting this partition onto every row with index greater than $i$, each $(k-1)$-set obtained in this way forms an edge with each vertex of row $i$.

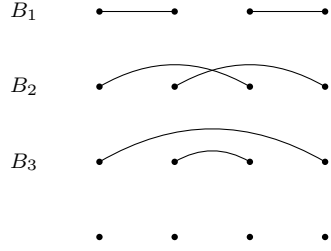
\begin{figure}[ht]
\centering
\begin{tikzpicture}
[pont/.style={circle, fill=black, inner sep=0.3mm}]

\foreach \x in {0,...,3}{
\foreach \y in {0,...,3}{
\node at (\x,\y) [pont] {};
}}
\draw (0,3)--(1,3);
\draw (2,3)--(3,3);
\draw[bend angle=30, bend left] (0,2) to (2,2);
\draw[bend angle=30, bend left] (1,2) to (3,2);
\draw[bend angle=30, bend left] (0,1) to (3,1);
\draw[bend angle=30, bend left] (1,1) to (2,1);

\draw (-1,3) node {\footnotesize$B_1$};
\draw (-1,2) node {\footnotesize$B_2$};
\draw (-1,1) node {\footnotesize$B_3$};

\end{tikzpicture}
\caption{The rows of the $3$-uniform hypergraph $\mathcal{H}_{4}$ with the matchings assigned to them.}
\end{figure}

First we show that $\mathcal{H}_m$ is an edge-minimal hypertree. Actually, a stronger result can be proven.

\begin{lemma}\label{Lmanyedge}
$\mathcal{H}_m$ is an edge-minimal 2-hypertree.
\end{lemma}

\begin{proof}
$\mathcal{H}_m$ is obviously a $k$-uniform hypergraph. It is enough to show that it is chain-connected, semicycle-free, edge-minimal and 2-hypertree.

\noindent $(1)$ Chain-connectedness:

Let $v_{ij}, v_{rs}\in V_m$ be vertices from different rows, where $i<r$. Since $B_i$ is a partition of $[m]$, there exist indices $s_2, \ldots, s_{k-1}\neq s$ such that $\{s, s_2, \ldots, s_{k-1}\}\in B_i$. Now, $\{v_{ij}, v_{rs}, v_{rs_2}, \ldots, v_{rs_{k-1}}\}\in\mathcal{E}_m$ by definition, so  $v_{ij}$ and $v_{rs}$ are connected by a chain of length 1.

We have to show chain connectedness of distinct vertices in the same row. Let $v_{ij_1}, v_{ij_2}$ be such vertices.

If $i<l+1$, then choosing an arbitrary $(k-1)$-set $\{s_1, s_2, \ldots, s_{k-1}\}$ from $B_i$, $\{v_{ij_1}, v_{i+1,s_1}, \ldots, v_{i+1,s_{k-1}}\}$, $\{v_{ij_2}, v_{i+1,s_1}, \ldots, v_{i+1,s_{k-1}}\}\in\mathcal{E}_m$, thus $v_{ij_1}$ and $v_{ij_2}$ are connected by a chain of length 2.

If $i=l+1$, then there exists a partition $B_r$ which has an edge $T=\{s_1, s_2, \ldots, s_{k-1}\}$ containing the pair $\{j_1,j_2\}$, due to $\bigcup_{q=1}^l B_q=\binom{[m]}{k-1}$. Hence, by definition, $\{v_{rs}, v_{is_1}, \ldots, v_{is_{k-1}}\}\in\mathcal{E}_m$ for an arbitrary index $s$, namely $v_{ij_1}$ and $v_{ij_2}$ are connected by a chain of length 1.

\noindent $(2)$ Semicycle-freeness and freeness of chains of length 3:

Let us notice the simple fact that for any two vertices $v_{ij}$ and $v_{rs}$ from different rows, there is exactly one edge that contains both of them. For example, if $i<r$, all suitable edges are of the form $\{v_{ij},v_{rs}, v_{rs_2},\ldots, v_{rs_{k-1}}\}$, where $T=\{s, s_2, \ldots, s_{k-1}\}\in B_i$, and such a $T$ is uniquely determined (exactly 1 partition class contains $s$).

From this observation follows that if $e=\{v_{ij}, v_{rs_1},v_{rs_2}, \ldots,v_{rs_{k-1}}\}$ is an edge of a chain of length $3$ in $\mathcal{H}_m$, then this is the last edge of it. If it does not hold, then there would be two different edges that intersect $e$ in distinct $(k-1)$-sets, but at least one of these edges has to contain two vertices of $e$ from different rows, (here we use $k\geq 3$). This is impossible since we have seen that only one edge can contain such pair. The same is true for semicycles instead of chains (however, keep in mind that in a semicycle the first and last vertices are identified).

So, every chain or semicycle has at most two edges (the first and last edges). Therefore, there is no chain of length at least 3 or semicycle of any length in $\mathcal{H}_m$ because even the shortest semicycle consists of 3 edges.

\noindent $(3)$ Edge-minimality:

Let us delete an arbitrary edge $e=\{v_{ij}, v_{rs_1}, \ldots, v_{rs_{k-1}}\}$ from $\mathcal{H}_m$. Note that $i<r$ by the definition of the edge-set. We show that the pairs $\{v_{ij}, v_{rs_h}\}$ become chain-disconnected, for all $1\leq h\leq k-1$.

It was shown above that at most one edge contains $v_{ij}$ and $v_{rs_h}$ together, and $e$ was that edge. Therefore they cannot be connected by a chain of length one. Part $(2)$ of the proof implies that only a chain of length 2 could connect them. Write down the row indices of its vertices in the natural sequence. This sequence is of the form $i,\underbrace{p,\ldots, p}_{k-1},r$, where $i,r<p$.

It means that there is a $(k-1)$-subset of the $p$th row, that forms two edges with two vertices from different rows. 

This is impossible because $B_i\cap B_r=\emptyset$. Thus, one cannot connect $v_{ij}$ and $v_{rs_h}$ without using the edge $e$. $\square$
\end{proof}

Now, since $|V_m|=n=m(l+1)$, the number of edges is
$|\mathcal{E}_m| = |\{\text{edges of row }\\ l\}|+|\{\text{edges of row } (l-1)\}| + \ldots+|\{\text{edges of the first row}\}|
=m\frac{m}{k-1}+2m\frac{m}{k-1}+\ldots+lm\frac{m}{k-1}
=\binom{l+1}{2}\frac{m^2}{k-1}\sim \frac{l^2m^2}{2(k-1)}
\sim \frac{n^2}{2(k-1)} \sim \frac{1}{k-1}\binom{n}{2}
$, which completes the proof of Theorem \ref{Tedgemin}. $\square$
\end{proof}

We remark that $|\mathcal{E}_m|=\frac{1}{k-1}\binom{n}{2}-(l+1) \left(\frac{1}{k-1} \binom{m}{2}\right)$, so the gap between the edge number and the asymptotic bound is, roughly speaking, $\frac{((k-2)!)^\frac{1}{k-1}}{2(k-1)} n^{1+\frac{1}{k-1}}$, and we miss exactly as many edges as we could have maximally placed inside the rows of $\mathcal{H}_m$. Based on this idea, one can slightly improve the construction of Theorem \ref{Tedgemin} by compressing the rows as much as possible. That, however, does not give as much improvement ($(\frac{1}{4}-\frac{1}{\sqrt{18}})n^\frac{3}{2}$ in $3$-uniform case) as complexity to the proof, hence we omit the details.

We also note that Theorem \ref{Tedgemin} shows that the bound of Theorem \ref{Tupper3} is asymptotically sharp for $l=2$ in 3-uniform case.

Next, we present our conjectured upper bound on the number of edges of edge-minimal hypertrees, however, we only prove a weaker result. These bounds are quite surprising because the order of magnitude does not depend on $k$.

\begin{con}\label{Smaxel}
For every $k$-uniform edge-minimal hypertree $\mathcal{F}=(V,\mathcal{E})$ on $n$ vertices, $|\mathcal{E}|\leq \frac{1}{k-1} \binom{n}{2}$ holds.
\end{con}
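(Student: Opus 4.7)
The plan is to construct, for each edge $e\in\mathcal{E}$, a family $A(e)$ of $k-1$ distinct pairs of vertices so that the sets $A(e)$ are pairwise disjoint across edges. From this, $(k-1)|\mathcal{E}|=\sum_e|A(e)|\leq\binom{n}{2}$ gives the desired bound. The natural reservoir from which to draw $A(e)$ is the \emph{cut set}
$$\mathrm{Cut}(e)\;:=\;\{\{u,v\}\in\tbinom{V}{2}: \text{every chain in } \mathcal{F} \text{ from } u \text{ to } v \text{ uses } e\},$$
which is nonempty for every $e$ exactly by the definition of edge-minimality.

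The first step would be a local lower bound: $|\mathrm{Cut}(e)|\geq k-1$ for every edge $e$. Starting from any $\{u,v\}\in\mathrm{Cut}(e)$ and a chain $C$ from $u$ to $v$ (automatically non-self-intersecting, since $\mathcal{F}$ is a hypertree), one notes that $C$ traverses $e$ at a unique position. I would examine the $k-1$ vertices of $e$ appearing ``on the $v$-side'' of the traversal of $e$ in $C$ and argue that each of them, paired with $u$, is again in $\mathrm{Cut}(e)$. Any chain from $u$ to such a vertex $x$ avoiding $e$, concatenated with the portion of $C$ from $x$ to $v$, would yield either a chain from $u$ to $v$ avoiding $e$ (contradicting $\{u,v\}\in\mathrm{Cut}(e)$) or a semicycle (forbidden in $\mathcal{F}$). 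Because chain-connectedness is not transitive, the splicing must be justified by a careful case analysis of where the hypothetical alternative chain first re-enters $C$ and what $(k-1)$-overlap it has there.

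The second step would be a Hall-type argument. Consider the bipartite incidence of $\mathcal{E}$ with $\binom{V}{2}$ where $e$ is adjacent to the elements of $\mathrm{Cut}(e)$. A system of $(k-1)$-fold distinct representatives with $A(e)\subseteq\mathrm{Cut}(e)$ exists (by the multi-representative version of Hall's theorem) precisely when, for every $\mathcal{E}'\subseteq\mathcal{E}$,
$$\Bigl|\bigcup_{e\in\mathcal{E}'}\mathrm{Cut}(e)\Bigr|\;\geq\;(k-1)\,|\mathcal{E}'|.$$
Granted this, the conjecture follows immediately.

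The hard part is verifying the Hall condition. Cut sets typically overlap heavily: on a non-self-intersecting chain of length $m$, the pair formed by the two extreme vertices lies in $\mathrm{Cut}(e)$ for \emph{every} edge $e$ of the chain, so one cannot hope for pairwise disjointness of the $\mathrm{Cut}(e)$'s. My proposed route is to study a hypothetical minimal violating subset $\mathcal{E}'$, restrict to the subhypergraph induced on the vertices appearing in $\bigcup_{e\in\mathcal{E}'}\mathrm{Cut}(e)$, and extract from that a smaller edge-minimal sub-structure whose edge-to-pair ratio either contradicts edge-minimality of $\mathcal{F}$ or contradicts semicycle-freeness; alternatively one could try a discharging scheme redistributing ``cut weight'' from each pair among the edges separating it. Controlling this overlap structure uniformly in $n$ and $k$ is the genuine combinatorial difficulty, which is presumably why the authors were only able to establish the weaker asymptotic-to-finite transfer statement rather than the conjecture itself.
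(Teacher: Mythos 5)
The statement you are trying to prove is Conjecture \ref{Smaxel}; the paper does not prove it and explicitly presents it as open, establishing only the weaker $O(n^3)$ bound of Theorem \ref{Tupperem} (via the sets $S(P)$ of edges separating a pair $P$, which is exactly the transpose of your incidence structure $\{u,v\}\in\mathrm{Cut}(e)\Leftrightarrow e\in S(\{u,v\})$) together with the observation, via the gluing construction of Theorem \ref{Tgluing1}, that an asymptotic bound of the form $\alpha\binom{n}{2}$ would automatically hold exactly. So your proposal should be judged as an attempted proof of an open conjecture, and as such it has two genuine gaps, both of which you partially acknowledge but neither of which you close.

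First, the local bound $|\mathrm{Cut}(e)|\geq k-1$ is not established. Your splicing step --- concatenating a hypothetical $e$-avoiding chain from $u$ to $x$ with ``the portion of $C$ from $x$ to $v$'' --- is precisely the kind of move that fails for chains in hypergraphs: two chains sharing a vertex (or even several vertices) need not compose into a chain or a semicycle, because the defining vertex sequence requires consecutive edges to overlap in $k-1$ \emph{consecutive} positions. This is the non-transitivity the paper emphasizes, and saying that ``a careful case analysis'' is needed is naming the difficulty, not resolving it. (There are also small issues: $u$ may itself lie in $e$, in which case one of your $k-1$ pairs degenerates.) Second, and more seriously, the Hall condition is entirely unverified, and your own example shows why it is delicate: on a non-self-intersecting chain the pair of extreme vertices lies in $\mathrm{Cut}(e)$ for every edge, so the cut sets can be far from disjoint and a naive double count of incidences gives only $\sum_P|S(P)|\geq(k-1)|\mathcal{E}|$, which is the wrong direction. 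Your two suggested routes for the Hall condition (minimal violating subset, discharging) are headings for an argument rather than an argument. The strategy is reasonable and the reduction to a $(k-1)$-fold system of distinct representatives is a sensible reformulation, but as it stands this is a research plan, not a proof; the conjecture remains open.
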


Although we conjecture that the number of edges in an edge-minimal hypertree is $O(n^2)$, we only prove the easier $O(n^3)$ upper bound.

\begin{theorem}\label{Tupperem}
For every $k$-uniform edge-minimal hypertree $\mathcal{F}=(V,\mathcal{E})$ on $n$ vertices, $|\mathcal{E}|\leq \frac{n(n-1)(n-k+1)}{2}$.
\end{theorem}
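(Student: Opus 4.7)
The plan is to exploit the characterisation of edge-minimality noted in the remark preceding the theorem: for every edge $e \in \mathcal{E}$ one can choose an unordered pair of distinct vertices $\phi(e) = \{u_e, v_e\}$ such that every chain of $\mathcal{F}$ joining $u_e$ and $v_e$ uses $e$. This is immediate from the definition of an edge-minimal hypertree: since $\mathcal{F} \setminus \{e\}$ is not chain-connected, some pair of vertices that was connected in $\mathcal{F}$ becomes chain-disconnected upon deletion of $e$, so any chain between them in $\mathcal{F}$ must contain $e$. Taking this pair as $\phi(e)$ gives the desired assignment.

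The strategy is then to bound each fibre of $\phi$ separately. So first I would fix a pair $\{u,v\} \subseteq V$ and let $e_1, \dots, e_m$ be the edges that $\phi$ sends to $\{u,v\}$. Since $\mathcal{F}$ is a hypertree, $u$ and $v$ are connected by some chain $C$ in $\mathcal{F}$, and by the defining property of $\phi$ every $e_i$ must appear among the edges of $C$, giving $m \le |E(C)|$. Next I would invoke the fact recalled in the preliminaries that semicycle-freeness forces every chain in $\mathcal{F}$ to be non-self-intersecting; hence the defining vertex sequence of $C$ has pairwise distinct entries, so its length is at most $n$ and consequently $|E(C)| \le n - k + 1$. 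Thus $|\phi^{-1}(\{u,v\})| \le n - k + 1$ for every pair $\{u,v\}$.

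Summing over all $\binom{n}{2}$ unordered pairs of vertices yields
\[
|\mathcal{E}| \;=\; \sum_{\{u,v\}\subseteq V} |\phi^{-1}(\{u,v\})| \;\le\; \binom{n}{2}(n-k+1) \;=\; \frac{n(n-1)(n-k+1)}{2},
\]
which is the claimed inequality. I do not foresee a genuine obstacle: once the assignment $\phi$ is set up, the entire argument reduces to a double-count, and the only ingredient beyond the definitions is the non-self-intersecting property of chains in a hypertree, which has already been established.
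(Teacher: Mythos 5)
Your proposal is correct and follows essentially the same argument as the paper: the paper assigns to each pair $P$ the set $S(P)$ of edges whose deletion disconnects $P$, notes that edge-minimality makes these sets cover $\mathcal{E}$, that $S(P)$ lies inside any single chain connecting $P$ (hence has at most $n-k+1$ elements by non-self-intersection), and sums over the $\binom{n}{2}$ pairs. Your map $\phi$ is just the same double count phrased from the edge side, and the fibre bound coincides with the paper's bound on $|S(P)|$.
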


\begin{proof}
Let us count the edges of $\mathcal{F}$. For every pair of vertices $P$, there are some edges that really take part in connecting the pair, i.e., deleting such an edge makes the two vertices chain-disconnected. Let us denote the set of these edges by $S(P)$. Then $\bigcup_{P\in\binom{V}{2}} S(P)=\mathcal{E}$ due to the edge-minimality. The set $S(P)$ is certainly contained in every chain connecting the vertices of $P$, otherwise we could delete an edge of $S(P)$ without $P$ becoming chain-disconnected. So, $|S(P)|\leq n-(k-1)$ because every chain of $\mathcal{F}$ is of length at most $n-(k-1)$. Hence, $|\mathcal{E}|=|\bigcup_{P\in\binom{V}{2}} S(P)|\leq \sum_{P\in\binom{V}{2}} |S(P)|\leq (n-(k-1))\binom{n}{2}$. $\square$
\end{proof}

This bound guarantees that edge-minimal hypertrees cannot have $\Omega\left(n^{k-1}\right)$ edges. Moreover, the existing examples (such as the one in Lemma \ref{Lmanyedge}) suggest that the best candidate for the asymptotically sharp upper bound is $\frac{1}{k-1} \binom{n}{2}$.

An edge-minimal 3-uniform $1$-hypertree has at most $\frac{1}{3} \binom{n}{2}$ edges since it is a geometric hypertree. As we can seen in Theorem \ref{Tupper3}, the bound $\frac{1}{2}\binom{n}{2}$ of Conjecture \ref{Smaxel} is true for 3-uniform 2-hypertrees, and it is asymptotically sharp by Theorem \ref{Tedgemin}. The first open question is the case of $3$-uniform edge-minimal $3$-hypertrees. They cannot have more than $\binom{n}{2}$ edges by Theorem \ref{Tupper}, but we must actually take advantage of the edge-minimality in order to prove Conjecture \ref{Smaxel}.

Surprisingly, both the conjectured upper bound and the bound of Theorem \ref{Tupperem} are decreasing in $k$. Usually, if we let the uniformity parameter increase, the degree of freedom would grow with it, and there would be more structures satisfying the predefined conditions, hence one may expect an upper bound to increase as well. This either means that these bounds are not optimal or the number of edges follows a somewhat counterintuitive pattern.

An interesting generalisation of Theorem \ref{Tupperem} is an upper bound on the number of edges in $k$-uniform edge-minimal $l$-hypertrees.

\begin{theorem}
For every $k$-uniform edge-minimal $l$-hypertree $\mathcal{F}=(V,\mathcal{E})$ on $n$ vertices, $|\mathcal{E}|\leq l\frac{n(n-1)}{2}$.
\end{theorem}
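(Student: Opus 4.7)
The plan is to mimic the proof of Theorem \ref{Tupperem} almost verbatim, the only change being how we bound the size of the sets $S(P)$. For every pair of vertices $P=\{u,v\}\in\binom{V}{2}$, define $S(P)$ to be the set of those edges $e\in\mathcal{E}$ such that $\mathcal{F}\setminus\{e\}$ no longer connects $u$ and $v$ by a chain. By the same argument used for Theorem \ref{Tupperem}, edge-minimality of $\mathcal{F}$ gives $\bigcup_{P\in\binom{V}{2}} S(P)=\mathcal{E}$: otherwise, an edge outside this union could be removed without destroying chain-connectedness anywhere.

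The key observation is that $S(P)$ is contained in every chain from $u$ to $v$ in $\mathcal{F}$. Indeed, if some $e\in S(P)$ were missing from such a chain $C$, then $C$ would still be a chain in $\mathcal{F}\setminus\{e\}$ connecting $u$ and $v$, contradicting the definition of $S(P)$. Hence $|S(P)|$ is at most the length of the shortest chain between $u$ and $v$, and in particular at most the length of any chain between them.

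Now the new ingredient: since $\mathcal{F}$ is an $l$-hypertree, \emph{every} chain in $\mathcal{F}$ has length at most $l$. Therefore $|S(P)|\leq l$ for every pair $P$. Summing,
\[
|\mathcal{E}|=\left|\bigcup_{P\in\binom{V}{2}} S(P)\right|\leq \sum_{P\in\binom{V}{2}} |S(P)|\leq l\binom{n}{2}=l\frac{n(n-1)}{2}.
\]

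There is essentially no obstacle; this is a direct strengthening of Theorem \ref{Tupperem} obtained by replacing the global chain-length bound $n-(k-1)$ (which holds in any chain-connected hypergraph on $n$ vertices) with the tighter bound $l$ coming from the $l$-hypertree hypothesis. The only thing to be slightly careful about is confirming that chain-connectedness is preserved between all other pairs when an edge outside $\bigcup_P S(P)$ is deleted, which is exactly the standard reformulation of edge-minimality already used in Theorem \ref{Tupperem}.
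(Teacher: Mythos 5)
Your proof is correct and is exactly the paper's argument: the paper's proof is a one-line remark that the proof of Theorem \ref{Tupperem} carries over with $|S(P)|\leq l$ replacing $|S(P)|\leq n-(k-1)$, which is precisely the substitution you make. No issues.
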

\begin{proof}
Analogous to the proof of Theorem \ref{Tupperem}. For every pair $P$ of $V$, $|S(P)|\leq l$ because $\mathcal{F}$ is an $l$-hypertree. $\square$
\end{proof}

If $l$ is constant, we have $m=O(n^2)$ and this upper bound is far below the bound of Theorem \ref{Tupperem} or Theorem \ref{Tupper3}.

In the last part of this section we show that any asymptotic upper bound of the form $\alpha\binom{n}{2}$ is a real upper bound. We define the \textit{edge-ratio} of a $k$-uniform hypertree that has $n$ vertices and $m$ edges, to be $m/\binom{n}{2}$. In order to prove the above statement, we show that any $k$-uniform edge-minimal hypertree with edge-ratio $\alpha$ can be extended to an infinite sequence of $k$-uniform edge-minimal hypertrees with edge-ratio $\alpha$. This sequence of hypertrees can be obtained by the following gluing construction.

\begin{theorem}[(Gluing of hypertrees)]\label{Tgluing1}
Let $\mathcal{H}=(V,\mathcal{E})$ be an $S(2,l,n)$\\ Steiner system (i.e., every pair of points is contained in exactly one edge and $m=|\mathcal{E}|=\binom{n}{2}/\binom{l}{2}$).
Let $k\geq 3$, and suppose that for each $E_i\in \mathcal{E}$, a $k$-uniform hypergraph $\mathcal{F}_i=(E_i, \mathcal{E}_i)$ is given.
\begin{itemize}
\item If for all $i=1,2,\ldots, m$ $|\mathcal{E}_i| = \alpha\binom{l}{2}$, then $|\bigcup_{i=1}^m \mathcal{E}_i| = \alpha\binom{n}{2}$.
\item If for all $i=1,2,\ldots, m$ $\mathcal{F}_i$ is an edge-minimal hypertree, then \\$\mathcal{F}=(V,\bigcup_{i=1}^m \mathcal{E}_i)$ is also an edge-minimal hypertree.

In this case, the edge-minimal hypertree $\mathcal{F}$ is called the \textit{gluing} of the edge-minimal hypertrees $\{\mathcal{F}_i: E_i\in \mathcal{E}\}$ along the Steiner system $\mathcal{H}$.
\end{itemize}
\end{theorem}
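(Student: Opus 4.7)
The central observation is that the Steiner property of $\mathcal{H}$ reduces every statement about $\mathcal{F}$ to a statement about a single $\mathcal{F}_i$. Since $k\geq 3$, every $k$-subset of a block $E_i$ contains a pair of vertices, and in an $S(2,l,n)$ system each pair determines its block uniquely. Hence $e\in\mathcal{E}_i\cap\mathcal{E}_j$ forces $E_i=E_j$, so the edge sets $\mathcal{E}_1,\ldots,\mathcal{E}_m$ are pairwise disjoint. Combined with $m=\binom{n}{2}/\binom{l}{2}$, this yields the first bullet at once:
\[
\Big|\bigcup_{i=1}^m\mathcal{E}_i\Big|=\sum_{i=1}^m|\mathcal{E}_i|=m\cdot\alpha\binom{l}{2}=\alpha\binom{n}{2}.
\]

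For the second bullet, the plan is to first establish a short \emph{locality lemma}: in any chain or semicycle of $\mathcal{F}$, all edges lie in the same $\mathcal{F}_j$. The $k$ vertices occupying any $k$ consecutive positions of the defining sequence must be distinct (the corresponding edge has size $k$), so two consecutive edges share $k-1\geq 2$ genuine vertices, which in turn contain a pair. By the Steiner property, that pair lies in a unique block, forcing both consecutive edges into the same $\mathcal{F}_j$; transitivity along the sequence then propagates this to every edge of the chain or semicycle.

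Given the locality lemma, the three required properties follow by the same pattern. \emph{Chain-connectedness}: for any $u\neq v$ the Steiner system provides a unique $E_i\ni u,v$, and the hypertree $\mathcal{F}_i$ supplies a chain connecting them, which remains a chain in $\mathcal{F}$. \emph{Semicycle-freeness}: a semicycle of $\mathcal{F}$ would, by the locality lemma, sit inside some $\mathcal{F}_j$, contradicting the fact that $\mathcal{F}_j$ is a hypertree. \emph{Edge-minimality}: fix $e\in\mathcal{E}_i$ and use the edge-minimality of $\mathcal{F}_i$ to obtain a pair $u,v\in E_i$ that becomes chain-disconnected in $\mathcal{F}_i\setminus\{e\}$; if they were still connected by a chain $P$ in $\mathcal{F}\setminus\{e\}$, the locality lemma would force $P\subseteq\mathcal{F}_j$ for some $j$, and since $u,v$ both appear in $P$'s edges they lie in $E_j$, so Steiner uniqueness applied to $\{u,v\}$ gives $E_i=E_j$; thus $P$ would be a chain in $\mathcal{F}_i\setminus\{e\}$, a contradiction.

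The main technical point is the locality lemma, and the hypothesis $k\geq 3$ enters precisely there: it guarantees that two consecutive edges of a chain overlap in a pair, not merely in a single vertex, so that the Steiner property applies. Once the lemma is in place, the rest is a routine reduction to the properties that each $\mathcal{F}_i$ is already known to have.
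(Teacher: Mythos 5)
Your proof is correct and follows essentially the same route as the paper: disjointness of the $\mathcal{E}_i$ via the Steiner property for the count, and the observation that any two edges meeting in $k-1\geq 2$ vertices lie in the same block, which is exactly the paper's stated key fact that every chain or semicycle of $\mathcal{F}$ lives inside a single $\mathcal{F}_i$. You have simply written out in full the details the paper leaves to the reader, including the edge-minimality reduction, and these details are sound.
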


\begin{proof}
Part 1 is obvious, since $|\mathcal{E}|=\binom{n}{2}/\binom{l}{2}$, $|\mathcal{E}_i| = \alpha\binom{l}{2}$ and $\mathcal{E}_i$ and $\mathcal{E}_j$ are disjoint if $i\neq j$, because $k\geq 3$.

The proof of part 2 is elementary, hence left to the reader. Notice that if two edge of $\bigcup_{i=1}^m \mathcal{E}_i$ intersect each other in $k-1$ vertices then they must belong to the same $\mathcal{E}_i$. It means that every chain or semicycle in $\mathcal{F}$ is a chain or semicycle in some $\mathcal{F}_i$. $\square$
\end{proof}

Let $\mathcal{F}$ be a $k$-uniform edge-minimal hypertree on $l$ vertices with edge-ratio $\alpha$. If $\mathcal{H}$ is an $S(2,l,n)$ Steiner system, let $\mathcal{H}(\mathcal{F})$ denote a gluing of $\binom{n}{2}/\binom{l}{2}$ identical copies of $\mathcal{F}$ along $\mathcal{H}$.
Based on the existence theorem of Lu and Ray-Chaudhuri from \cite{lu,chaudhuri} we know that there exists an infinite sequence $\mathcal{H}_1,\mathcal{H}_2, \ldots$ of $S(2,l,n)$ Steiner systems, thus $\mathcal{H}_1(\mathcal{F}),\mathcal{H}_2(\mathcal{F}), \ldots$ is a sequence of $k$-uniform edge-minimal hypertrees with edge-ratio $\alpha$.
It means that the supremum and the limit superior of the edge-ratios of $k$-uniform edge-minimal hypertrees must be equal.

%%************************************************************************

\section{Edge-maximal Hypertrees in $3$-uniform Case}\label{6}

In this section, we show a construction of $3$-uniform edge-maximal hypertrees, and conjecture that the corresponding edge number is an asymptotic lower bound on the number of edges of $3$-uniform edge-maximal hypertrees.

\begin{theorem}\label{Temax}
For all $n>2$ even, there exists an edge-maximal 3-uniform hypertree $\mathcal{M}=(V,\mathcal{E})$ with $\frac{1}{2}\binom{n}{2}-\frac{1}{4}n$ edges.
\end{theorem}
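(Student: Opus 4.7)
My plan is to construct such an $\mathcal{M}$ explicitly on $n=2t$ vertices by stacking ``nested'' stars on a vertex set partitioned into pairs. Take $V=\{a_1,b_1,\ldots,a_t,b_t\}$ with pairs $P_j=\{a_j,b_j\}$, and for each $j=2,\ldots,t$ include the star $S_j$ of size $2(j-1)$ with kernel $P_j$ and leaves $P_1\cup\cdots\cup P_{j-1}$, i.e.,
\[ \mathcal{E}=\bigcup_{j=2}^{t}\bigl\{\{a_j,b_j,v\}:v\in P_1\cup\cdots\cup P_{j-1}\bigr\}. \]
A direct count then gives $|\mathcal{E}|=\sum_{j=2}^{t} 2(j-1)=t(t-1)=\tfrac{1}{2}\binom{n}{2}-\tfrac{1}{4}n$, the desired number of edges.

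The pivotal intermediate claim is that \emph{two distinct edges of $\mathcal{M}$ share two vertices only if they lie in the same star $S_j$}. This follows from a short index comparison: if $e_1=\{a_j,b_j,v\}\in S_j$ and $e_2=\{a_k,b_k,w\}\in S_k$ with $j>k$, then every vertex of $e_1\setminus P_j$ has pair-index $<j$ while the two vertices of $P_j$ have pair-index exactly $j$ (and analogously for $e_2$), and the few sub-cases of ``which two vertices are shared'' all yield contradictions. Granting this, semicycle-freeness follows: since any semicycle contains a non-self-intersecting one, we may restrict to that case, and then consecutive edges have intersection of size $2$, so all edges of the semicycle would lie in a single $S_j$ and thus all contain the pair $P_j$; but then three consecutive edges would have intersection of size $\geq 2$, contradicting the well-known fact that in a non-self-intersecting $3$-uniform semicycle the intersection of three consecutive edges has size exactly $1$. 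Chain-connectedness is a direct check: two vertices in distinct pairs already sit in a common edge of the star with the larger pair-index, and two vertices of the same pair $P_j$ are joined by a length-$2$ chain through the kernel of any $S_k$ with $k\neq j$ and $k\geq 2$.

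The main work is edge-maximality. I would split an arbitrary missing edge $e\notin\mathcal{E}$ into three exhaustive cases, and in each pair $e$ with two existing edges of $\mathcal{M}$ so that the three edges together span exactly $4$ vertices and realise $3$ of the $4$ possible triples, which is precisely a length-$3$ semicycle:
(i) if $e=\{a_1,b_1,x\}$ with $x\in P_k$ (so $k\geq 2$), pair it with $\{a_k,b_k,a_1\},\{a_k,b_k,b_1\}\in S_k$;
(ii) if $e=\{x_i,x_j,x_k\}$ with $i<j<k$ and $x_\ell\in P_\ell$ (a pair-free edge), pair it with $\{a_k,b_k,x_i\},\{a_k,b_k,x_j\}\in S_k$;
(iii) if $e=\{a_j,b_j,v\}$ with $j\geq 2$ and $v\in P_k$ for some $k>j$, pair it with $\{a_k,b_k,a_j\},\{a_k,b_k,b_j\}\in S_k$.
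Case exhaustiveness is a split on whether $e$ contains some pair $P_j$ and, if so, on the position of the third vertex of $e$ relative to $j$; the paired star-edges always exist because the index $k$ appearing in each case is at least $2$, so $S_k$ is nonempty and contains the required edges. The main obstacle is purely bookkeeping: the symmetric role of the ``top'' pair $P_k$ in each case makes the underlying semicycle picture uniform, so that once the three cases are laid out, verifying each is essentially the same four-vertex calculation.
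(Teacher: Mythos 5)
Your construction is the paper's construction with the pair-indices reversed (the paper takes edges $\{v_{ij},v_{k1},v_{k2}\}$ with $k<i$, i.e., each kernel pair is joined to all vertices of \emph{larger}-indexed pairs), and your verification --- exhibiting, for each missing triple, two star-edges that complete it to a length-$3$ semicycle on four vertices --- is exactly the paper's edge-maximality argument, so the proof goes through. One small slip: for two vertices of the same pair $P_j$ with $j\geq 2$, the length-$2$ chain through the kernel of $S_k$ exists only when $k>j$ (the leaves of $S_k$ lie in $P_1\cup\cdots\cup P_{k-1}$), so your ``any $k\neq j$'' fails for $j=t$; this is immaterial, though, since for $j\geq 2$ every edge of $S_j$ already contains both $a_j$ and $b_j$ and is itself a length-$1$ chain connecting them.
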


\begin{proof}
First, let us define $\mathcal{M}$. Let $n>2$ be an even integer and $V=\{v_{ij}: 1\leq i\leq n/2,\, j=1,2\}$. The set of edges is $\mathcal{E}=\{\{v_{ij},v_{k1},v_{k2}\}:k<i\}$. If $v\in V$, then $\overline{v}$ denotes the pair of $v$, i.e., $$\overline{v}=
\left\{\begin{array}{ll} 
v_{k2}, & \text{if } v=v_{k1}\\
v_{k1}, & \text{if } v=v_{k2} 
\end{array}\right.$$

The second step is to show that $\mathcal{M}$ is an edge-maximal hypertree. The chain-connectedness and semicycle-freeness can be shown similarly as in the proof of Theorem \ref{Lmanyedge} (choose $B_i=\{1,2\}$ for all $i$), only edge-maximality remains.

Let $h$ be a new edge of $\mathcal{M}$. Then $h$ is of the form $\{v_{kl}, v_{i1}, v_{i2}\}$, for $i>k$ or $\{v_{ij}, v_{kl}, v_{rs}\}$, for $i>k>r$ (the other triples are in $\mathcal{E}$).
If $h=\{v_{kl}, v_{i1}, v_{i2}\}$, then the sequence $\overline{v}_{il} v_{il} v_{kl} \overline{v}_{kl} \overline{v}_{il}$ determines a semicycle in $\mathcal{M}$ because $\{v_{il}, v_{kl}, \overline{v}_{kl}\}$, $\{v_{kl}, \overline{v}_{kl}, \overline{v}_{il}\}\in \mathcal{E}$.

If $h=\{v_{ij}, v_{kl}, v_{rs}\}$, then the sequence $v_{ij} v_{kl} v_{rs} \overline{v}_{rs} v_{ij}$ determines a semicycle in $\mathcal{M}$ because $\{v_{kl}, v_{rs}, \overline{v}_{rs}\}$, $\{v_{rs}, \overline{v}_{rs}, v_{ij}\}\in \mathcal{E}$. Thus, $\mathcal{M}$ is edge-maximal.

The reader may easily verify that the number of edges is $\frac{n(n-2)}{4}=\frac{1}{2}\binom{n}{2}-\frac{1}{4}n$, which completes the proof. $\square$
\end{proof}

$\mathcal{M}$ is an ordered extension of the $1$-$(n,2,1)$ block design. It is actually a complete matching with edges $\{v_{i1},v_{i2}\}$, for $i=1,2,\ldots,n/2$, and if we apply Lemma \ref{Cordext} with the vertex-sequence $v_{11}, v_{12}, v_{21}, v_{22}, \ldots, v_{\frac{n}{2},1}, v_{\frac{n}{2},2}$, $\mathcal{M}$ is proved to be a 2-hypertree, and its edge number is asymptotically the bound we have stated in Theorem \ref{Tupper3} in the case of $l=2$ and $k=3$.

\begin{con}\label{Semax}
Every 3-uniform edge-maximal hypertree on $n$ vertices has at least $\frac{1}{2}\binom{n}{2}-O(n)$ edges.
\end{con}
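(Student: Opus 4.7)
The plan is to exploit edge-maximality through a link-graph analysis. For each vertex $v \in V$, let $L_v$ denote the link of $v$: the graph on $V \setminus \{v\}$ whose edges are $\{a,b\}$ with $\{v,a,b\} \in \mathcal{E}$. Since $\mathcal{F}$ is semicycle-free, each $L_v$ is triangle-free, because a triangle in $L_v$ corresponds precisely to a length-$3$ semicycle with common vertex $v$ (the three edges $\{v,a,b\},\{v,b,c\},\{v,a,c\}$ form such a semicycle via the sequence $a,b,v,c,a$). The starting identity $\sum_v |E(L_v)| = 3|\mathcal{E}|$ shows that proving $|\mathcal{E}| \geq \frac{1}{2}\binom{n}{2} - O(n)$ reduces to a lower bound on the average link size.

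The key translation step is that, for a non-edge triple $T = \{a,b,c\}$, edge-maximality forces some semicycle upon adding $T$. If this semicycle has length~$3$, its common vertex lies in $T$, and this is equivalent to the remaining two vertices of $T$ having a common neighbour in the link of that vertex (witnessed by two edges $\{v,a,y\},\{v,b,y\}$ with common kernel $\{v,y\}$). Thus for every non-edge $T$ witnessed by a length-$3$ semicycle there is some $v \in T$ for which $T\setminus\{v\}$ is either an edge of $L_v$ or has a common neighbour in $L_v$. I would then aim for a quantitative dichotomy: either many of the links $L_v$ are dense, in which case $\sum_v |E(L_v)|$ is already large and we are done, or there are many non-edges $\{v,a,b\}$ that fail to create a length-$3$ semicycle, so that they must be witnessed by a longer one. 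A Cauchy--Schwarz estimate in the triangle-free graph $L_v$ (bounding the number of distance-at-most-$2$ pairs by roughly $|E(L_v)|^2/(n-1)$) should let one convert this dichotomy into a numeric inequality.

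The principal obstacle is controlling the non-edges whose only witness is a semicycle of length at least~$4$. Such semicycles have no common vertex and no clean description in a single link, so they cannot be counted link-by-link. One natural strategy is to exploit that the prototype $\mathcal{M}$ of Theorem~\ref{Temax} is itself a $2$-hypertree, and to show that any edge-maximal hypertree near the extremal edge count must likewise be ``close to'' a $2$-hypertree, so that the Star-equation of Theorem~\ref{Tstar} applies up to a linear error that absorbs the non-$2$-hypertree contributions. A complementary line is to argue that long chains, which are precisely what generate long semicycle witnesses, are scarce in an edge-maximal hypertree, so that most non-edges are in fact witnessed by length-$3$ semicycles; this would also give a sense in which edge-maximality approximately implies the $2$-hypertree property. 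Turning either heuristic into a rigorous bound, while keeping the error term linear in~$n$ as the conjecture predicts, is where the main technical difficulty lies.
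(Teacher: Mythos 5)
This statement is Conjecture~\ref{Semax}: the paper does not prove it, and explicitly leaves it open. The only lower bound actually established in the paper is Theorem~\ref{Tturan}, which for $k=3$ gives roughly $\frac{1}{6}\binom{n}{2}$ via a Tur\'an-number argument on the $(k-1)$-shadow, together with the matching construction of Theorem~\ref{Temax} showing the conjectured bound would be sharp. So there is no ``paper proof'' to compare against, and your proposal must be judged on its own: it is a programme, not a proof, and you say so yourself. The link-graph setup is sound as far as it goes --- each $L_v$ is indeed triangle-free (a triangle $\{a,b\},\{b,c\},\{c,a\}$ in $L_v$ is exactly the length-$3$ semicycle $a,b,v,c,a$), and $\sum_v|E(L_v)|=3|\mathcal{E}|$ is correct --- but the argument stops precisely at the point where the conjecture becomes hard: non-edge triples whose only witnessing semicycle has length at least $4$ have no common vertex and cannot be charged to a single link, and neither of your two repair strategies (approximate $2$-hypertree structure plus the Star-equation, or scarcity of long chains) is carried out. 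Note also that the Star-equation runs in the wrong direction for this purpose: for $k=3$ it reads $|\mathcal{E}|=\frac{1}{2}\binom{n}{2}-\frac{1}{2}\sum C_i-\frac{l}{2}$, so to conclude you would need to show that an edge-maximal hypertree has only $O(n)$ maximal stars and $O(n)$ uncovered pairs, which is essentially a restatement of the conjecture rather than a reduction of it.

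One concrete step in your sketch is also false as stated. You propose to bound the number of pairs at distance at most $2$ in the triangle-free graph $L_v$ by roughly $|E(L_v)|^2/(n-1)$. Cauchy--Schwarz gives a \emph{lower} bound of this order on $\sum_y\binom{d(y)}{2}$, not an upper bound on the number of distinct covered pairs: the star $K_{1,N-1}$ is triangle-free with $N-1$ edges, yet every one of its $\binom{N-1}{2}$ non-adjacent pairs has a common neighbour, far exceeding $(N-1)^2/N$. So the quantitative dichotomy you describe does not close even the length-$3$ case without a different counting argument. In short: the statement remains a conjecture, your framework is a reasonable and genuinely different starting point from the paper's Tur\'an-type bound, but it contains both an acknowledged structural gap and an incorrect intermediate estimate.
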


If this conjecture is true, it would be asymptotically sharp due to Theorem \ref{Temax}. For greater uniformity parameters, the lower bound should probably be $\frac{1}{k-1}\binom{n}{k-1}$, but our evidences seems to be vague in this general case.

We close the section with an interesting lower bound on the number of edges of $k$-uniform edge-maximal hypertrees.

\begin{theorem}\label{Tturan}
If $\mathcal{F}=(V,\mathcal{E})$ is a $k$-uniform edge-maximal hypertree of order $n$, then $|\mathcal{E}|\geq \frac{1}{k(k-1)}\frac{n-k+1}{n-k+2}\binom{n}{k-1}$.
\end{theorem}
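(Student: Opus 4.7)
The plan is a Tur\'an-type double counting. The key lemma I would prove is: for every non-edge $e$ of an edge-maximal hypertree $\mathcal{F}$, there exist two distinct edges $f_1, f_2 \in \mathcal{E}$ with $|e \cap f_i| = k - 1$. Since $\mathcal{F} \cup \{e\}$ must contain a semicycle containing $e$ (by edge-maximality, combined with the semicycle-freeness of $\mathcal{F}$), one can extract a non-self-intersecting semicycle $(E_1 = e, E_2, \ldots, E_t)$ of length $t \geq 3$. The edge $E_2$ always has $(k-1)$-overlap with $e$; when $t = 3$, a direct computation gives $|E_3 \cap E_1| = k - 1$ as well, yielding the two required edges. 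For $t \geq 4$ one checks that $|E_t \cap E_1| \leq k - 2$, so $E_t$ is not a $(k-1)$-neighbor of $e$ and a more subtle argument is required.

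Granted the key lemma, the bound follows by a clean count. Let $P$ be the number of ordered pairs $(e, f)$ with $e \notin \mathcal{E}$, $f \in \mathcal{E}$, and $|e \cap f| = k - 1$. The lemma gives $|P| \geq 2\bigl(\binom{n}{k} - |\mathcal{E}|\bigr)$. On the other hand, grouping pairs by $L := e \cap f \in \binom{V}{k-1}$ and using that for fixed $L$ there are $d(L)$ edges and $n - k + 1 - d(L)$ non-edges containing $L$,
\[
|P| \;=\; \sum_{L \in \binom{V}{k-1}} d(L)\bigl(n-k+1-d(L)\bigr) \;\leq\; (n-k+1)\sum_{L} d(L) \;=\; k(n-k+1)\,|\mathcal{E}|.
\]
Combining the two bounds yields $|\mathcal{E}| \geq \dfrac{2\binom{n}{k}}{k(n-k+1)+2}$. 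Substituting $\binom{n}{k} = \tfrac{n-k+1}{k}\binom{n}{k-1}$, the claimed theorem reduces to the algebraic inequality $2(k-1)(n-k+2) \geq k(n-k+1)+2$, which simplifies to $(k-2)(n-k+3) \geq 0$ and holds trivially for every $k \geq 2$.

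The hard part will be completing the key lemma when the shortest non-self-intersecting blocker of $e$ has length $t \geq 4$. My plan there is to exploit the uniqueness hypothesis: if $f = E_2$ were the unique edge of $\mathcal{F}$ at $(k-1)$-overlap with $e$, then the $(k+1)$-set $T = e \cup f$ would contain no other edge of $\mathcal{F}$ (any such edge would share $k - 1$ vertices with $e$, contradicting the uniqueness), so all $k-1$ of the remaining $k$-subsets of $T$ would be non-edges of $\mathcal{F}$. Applying edge-maximality to each of them and exploiting the semicycle-freeness constraint on the $(k+1)$-sets these generate should propagate the uniqueness and eventually force the existence of a second edge at $(k-1)$-overlap with $e$. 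An arguably cleaner alternative would be to prove the stronger structural fact that in an edge-maximal hypertree every non-edge actually admits a length-$3$ blocker, from which the key lemma follows immediately.
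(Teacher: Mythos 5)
Your counting framework is sound and your algebra checks out: \emph{if} every non-edge had at least two edges of $\mathcal{F}$ meeting it in $k-1$ vertices, then your double count would give $|\mathcal{E}|\geq 2\binom{n}{k}/(k(n-k+1)+2)$, which is indeed at least the claimed bound since the difference reduces to $(k-2)(n-k+3)\geq 0$. But the entire weight of the argument rests on that ``two neighbours'' lemma, and you have not proved it. Your semicycle analysis only delivers two $(k-1)$-neighbours when the extracted non-self-intersecting semicycle either has length $3$ or has $e$ as an interior edge; when every semicycle created by $e$ has length $\geq 4$ with $e$ as its first or last edge, you get only one neighbour, and your proposed repair (``propagate the uniqueness and eventually force a second edge'') is a plan, not a proof. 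This matters quantitatively: with only one guaranteed neighbour per non-edge the same count yields $|\mathcal{E}|\geq\binom{n}{k}/(k(n-k+1)+1)\approx\frac{1}{k^2}\binom{n}{k-1}$, which falls short of the stated $\approx\frac{1}{k(k-1)}\binom{n}{k-1}$ for all $n>2k-2$. So the factor of $2$ is not a refinement --- it is the whole theorem, and it is exactly the part left open.

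For comparison, the paper sidesteps this entirely. It uses only the easy fact you also have (every non-edge meets some edge in $k-1$ vertices, since otherwise adding it would preserve both semicycle-freeness and chain-connectedness), observes that the $(k-1)$-shadow $\mathcal{F}^{(k-1)}$ of $\mathcal{E}$ therefore has no independent set of size $k$, and then invokes de Caen's lower bound on the Tur\'an number $T(n,k,k-1)$ together with $|\mathcal{E}^{(k-1)}|\leq k|\mathcal{E}|$. The strengthening beyond the naive covering bound --- precisely what your unproven lemma is supposed to supply --- is imported as a black box from de Caen's theorem. If you want to keep your self-contained route, you must either actually prove the two-neighbour lemma (including the $t\geq 4$ endpoint case, or rule that case out) or replicate de Caen's Moon--Moser-type averaging argument within your count; as written, the proof has a genuine gap.
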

\begin{proof}
Let $T(n,k,r)$ be the usual hypergraph Tur\'an number, i.e., the minimal edge number of an $r$-uniform hypergraph that contains no independent set of size $k$.
Let $\mu(n)$ denote the minimal edge number of an edge-maximal hypertree of order $n$. For every $k$-set $s\subset V$, $s\notin\mathcal{E}$, there exists an edge $e\in \mathcal{E}$ such that $|s\cap e|=k-1$, otherwise $\mathcal{F}'=(V,\mathcal{E}\cup\{s\})$ would be a hypertree too, in contradiction with the edge-maximality. Let us form a $(k-1)$-uniform hypergraph $\mathcal{F}^{(k-1)}=(V,\mathcal{E}^{(k-1)})$ from $\mathcal{F}$ by exchanging every edge by its $(k-1)$-subsets. Then $k|\mathcal{E}|\geq|\mathcal{E}^{(k-1)}|$, and $\mathcal{F}^{(k-1)}$ contains no independent set of size $k$, thus $|\mathcal{E}^{(k-1)}|\geq T(n,k,k-1)$. Using de Caen's lower bound on Tur\'an numbers \cite{caen}, $T(n,k,k-1)\geq \frac{1}{k-1}\frac{n-k+1}{n-k+2}\binom{n}{k-1}$, so $$|\mathcal{E}|\geq \frac{1}{k} |\mathcal{E}^{(k-1)}|\geq \frac{1}{k(k-1)}\frac{n-k+1}{n-k+2}\binom{n}{k-1}.$$ $\square$
\end{proof}

Let us call a hypertree \textit{isolated} if it is both edge-minimal and edge-maximal.
An important consequence of Theorem \ref{Tturan} is that there are finitely many $k$-uniform isolated hypertrees if $k> 4$, since the edge number of an edge-minimal hypertree is $O(n^3)$ which is asymptotically less than the $\Omega(n^{k-1})$ bound stated in Theorem \ref{Tturan}, showing that there is a considerable gap between the edge numbers of edge-minimal and edge-maximal hypertrees. This fact has an interesting interpretation: if there is given a $k$-uniform hypertree with $k> 4$, one can add a new edge to it or delete an original edge of it without violating the hypertree property. An isolated hypertree would be an isolated point of the poset of hypertrees where the ordering is defined by the ``subhypergraph'' relation.
It is an open question that there are infinitely many isolated hypertrees in cases of $k=3$ and $k=4$. Of course, in case of $k=2$, every tree is isolated.

%%************************************************************************

\section{Open problems}

There are many interesting open problems related to hypertrees. We have mentioned some obvious questions in this paper such that: ``What is the maximal number of edges in a $k$-uniform edge-minimal hypertree of order $n$?'' or ``What is the minimal number of edges in a $k$-uniform edge-maximal hypertree of order $n$?''.

We have stated the following two conjectures:

\begin{enumerate}
\item Every $k$-uniform edge-minimal hypertree on $n$ vertices has at most $\frac{1}{k-1} \binom{n}{2}$ edges.
\item Every 3-uniform edge-maximal hypertree on $n$ vertices has at least $\frac{1}{2}\binom{n}{2}-O(n)$ edges.
\end{enumerate}

It remained an open question, too, that the upper bound of the edge number of $k$--uniform $l$-hypertrees is asymptotically sharp or not for every fixed $k$ and $l$. We can also modify the definition of edge-minimal hypertrees slightly. Instead of edge-minimal hypertrees, it is interesting to study edge-minimal chain-connected hypergraphs. Similarly, we can study edge-maximal semicycle-free hypergraphs instead of edge-maximal hypertrees.

If we allow a chain to use an edge several times then some of our definitions and theorems would change significantly. One can, for example show forbidden substructures in edge-minimal chain-connected hypergraphs.

At the end of section \ref{6}, we introduced the isolated (simultaneously edge-minimal and edge-maximal) hypertrees. This is a small subclass of hypertrees, and has finite cardinality if $k> 4$. In case of $k=3$, the cardinality is not conjectured yet. However, if it is infinite, then our conjectures suggest that the asymptotic edge number of this family is $\frac{1}{2}\binom{n}{2}$.

Alexey Pokrovskiy and his research group at Freie Universität Berlin recently showed a great interest in these open questions, and showed the author some interesting constructions that might be valuable in future research of edge-minimal hypertrees.

\section*{Acknowledgement}
We would like to thank G. Y. Katona as well as the anonymous reviewers for their valuable advices.

The work reported in the paper has been developed in the framework of the
 project "Talent care and cultivation in the scientific workshops of BME"
project. This project is supported by the grant  T\'AMOP -
4.2.2.C-11/1/KONV-2012-0013.

The author is partially supported by the Hungarian National 
Research Fund (grant number K108947).

%%************************************************************************

\end{document}